\newtheorem{theorem}{Theorem}[section]
\newtheorem{lemma}[theorem]{Lemma}
\newtheorem{definition}[theorem]{Definition}
\newtheorem{cor}[theorem]{Corollary}
\newtheorem{conj}[theorem]{Conjecture}
\def\R{{\bf\mathbb R} }
\begin{document}

\title{The Kodaira Dimension of Lefschetz Fibrations}
\author{Josef Dorfmeister}
\address{School  of Mathematics\\  University of Minnesota\\ Minneapolis, MN 55455}
\email{dorfmeis@math.umn.edu}
\author{Weiyi Zhang }
\address{School  of Mathematics\\  University of Minnesota\\ Minneapolis, MN 55455}
\email{zhang393@math.umn.edu}
\date{\today}

\begin{abstract}
In this note, we verify that the complex Kodaira dimension $\kappa^h$ equals the symplectic Kodaira dimension $\kappa^s$ for smooth $4-$manifolds with complex and symplectic structures. We also calculate the Kodaira dimension for many Lefschetz fibrations.
\end{abstract}

\maketitle

\tableofcontents

\section{Introduction}
Let $M$ be a smooth 4-manifold.  The manifold $M$ can be endowed with
different structures:  complex, symplectic, Lefschetz fibration,
etc.  If given a complex structure, the (complex) Kodaira
dimension $\kappa^h(M,J)$ is defined within the framework of
birational classification of complex manifolds. This notion has proven to be very useful,  at least in complex dimension $2$, in which we
have a detailed birational classification. Similarly, given a
symplectic structure, there is the concept of a (symplectic)
Kodaira dimension $\kappa^s(M,\omega)$ (see \cite{MS1},\cite{LB1} and \cite{L}). One would like to achieve a similar classification as in the complex case for symplectic manifolds.   Such a  classification
is very clear when the symplectic Kodaira dimension is $-\infty$.
Moreover, in symplectic Kodaira dimension $0$, T.-J. Li ( \cite{L} and \cite{L2} ) gives a
classification of symplectic $4$ manifolds up to their rational
homology groups (See also \cite{Sm} for a similar result). In Section \ref{lef0} (Prop. \ref{kod0}), we will give some evidence to
support his conjecture, that the classification in \cite{L2} is complete. 

The main goal of this paper is to show
the relationship between $\kappa^h(M,J)$ and $\kappa^s(M,\omega)$ on manifolds admitting complex and symplectic structures and
the role they play in the classification of Lefschetz
fibrations (and pencils).

In section 2, we state the definitions of the Kodaira
dimension in the complex and symplectic cases including some useful background information.  We then define the Kodaira dimension $\kappa^l(g,h,n)$ for Lefschetz fibrations with base genus $h\ge 1$, fiber genus $g$ and $n$ singular points (See Def \ref{lf}).

In Section 3, we show that the
symplectic Kodaira dimension $\kappa^s(M,\omega)$ as defined in \cite{L} is the same as
the complex Kodaira dimension for all smooth 4-manifolds admitting
complex and symplectic structures. That is, we have the following
theorem:

{\theorem \label{main1} Let $M$ be a smooth 4-manifold which
admits a symplectic structure $\omega$ as well as a complex structure $J$.
Then $\kappa^s(M,\omega)=\kappa^h(M,J)$.}

The symplectic structure $\omega$
 is not necessarily compatible with or even tamed by the
complex structure $J$. In the K\"ahler case, this result is known to T.-J.
Li (\cite{L}). This allows us to give the Kodaira dimension of a complex or
symplectic 4-manifold without ambiguity, we denote it simply as $\kappa(M)$.

In the following, we address the equivalence of $\kappa(M)$ and the Lefschetz Kodaira dimension $\kappa^l(g,h,n)$.  This is broken into three cases:  Excluding the exceptional case $(g,h,n)=(> 2,1,\ge 1)$, we first prove that on a complex manifold $M$
the Kodaira dimension $\kappa^l(g,h,n)$ coincides with the complex Kodaira dimension $\kappa^h(M,J)$:

{\theorem (See Theorem \ref{L=K}) \label{main2} If the complex manifold $(M,J)$ admits a smooth $(g,h,n)$ Lefschetz fibration with $h\ge 2$, then $\kappa^h(M,J)=\kappa^l(g,h,n)$.  This also holds when $h=1$, provided that either $g\le 2$ or $n=0$ (the case of a fiber bundle).

}

In Section \ref{sympeq}, we address the issue of equivalence on a symplectic Lefschetz fibration.  This differs very little from the complex case, the main result 
is

\begin{theorem}\label{main3}
Let $M$ be a symplectic manifold admitting the structure of a smooth $(g,h,n)$ Lefschetz fibration with $h\ge 2$, then $\kappa^s(M,\omega)=\kappa^l(g,h,n)$.  This also holds when $h=1$, provided that either $g\le 2$ or $n=0$ (the case of a fiber bundle).
\end{theorem}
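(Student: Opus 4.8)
The plan is to follow the same strategy as in the proof of Theorem \ref{main2}, replacing the holomorphic data by their symplectic counterparts. Since a symplectic $(g,h,n)$ Lefschetz fibration need not admit a compatible complex structure, one cannot simply combine Theorems \ref{main1} and \ref{main2}; instead I would compute the symplectic Kodaira dimension directly from the fibration data. First, equip $M$ with a Gompf--Thurston symplectic form $\omega$ adapted to the fibration $\pi\colon M\to\Sigma_h$, so that the fiber class $[F]$ satisfies $[\omega]\cdot[F]>0$ and $K_\omega$ is the canonical class of this deformation class. After passing to the minimal model, whose symplectic Kodaira dimension agrees with that of $M$, the adjunction formula on the generic fiber, an embedded symplectic surface of genus $g$, yields the key numerical input
\[
K_\omega\cdot[F]=2g-2.
\]

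The argument then splits along the value of $g$, exactly as in the complex case. When $g=0$ the fiber is a symplectic sphere with $K_\omega\cdot[F]=-2<0$; genus-zero Lefschetz fibrations are rational or ruled, so $\kappa^s(M,\omega)=-\infty=\kappa^l(0,h,n)$. When $g=1$ one has $K_\omega\cdot[F]=0$, so $K_\omega$ restricts trivially to the fibers and $\kappa^s\le 1$; here I would read off $\kappa^s\in\{-\infty,0,1\}$ from $h$ and $n$ using the structure of elliptic (torus) Lefschetz fibrations together with the Seiberg--Witten determination of $K_\omega$, matching the definition of $\kappa^l(1,h,n)$. When $g\ge 2$ we get $K_\omega\cdot[F]>0$, hence $K_\omega\cdot[\omega]>0$ for the adapted form and $\kappa^s\ge 1$; it remains to decide between $1$ and $2$ by the sign of
\[
K_\omega^2=2\chi(M)+3\sigma(M),\qquad \chi(M)=4(g-1)(h-1)+n.
\]

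For $h\ge 2$ the plan is to show $K_\omega^2>0$, so that $\kappa^s(M,\omega)=2=\kappa^l(g,h,n)$; the positivity should come from the fact that $K_\omega$ pairs positively both along the fibers and along a base direction, as in the model $\Sigma_g\times\Sigma_h$. For $h=1,\ n=0$ the fibration is a surface bundle over the torus, so $\sigma(M)=0$ and $\chi(M)=0$, giving $K_\omega^2=0$ and $\kappa^s(M,\omega)=1=\kappa^l(g,1,0)$.

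The main obstacle is precisely this determination of the sign of $K_\omega^2$ in the range $g\ge 2$. The signature $\sigma(M)$ of a Lefschetz fibration is governed by the monodromy and is \emph{not} a function of $(g,h,n)$ alone, so one cannot in general read off whether $\kappa^s$ equals $1$ or $2$ from the triple. For $h\ge 2$ one must genuinely rule out $K_\omega^2\le 0$, which is where the real work lies, while for $n=0$ the vanishing of the signature makes the sign immediate. This same failure to control $\sigma(M)$ from $(g,h,n)$ is exactly why the case $(g,h,n)=(>2,1,\ge 1)$ must be excluded; by contrast, for $g\le 2$ the available signature constraints and the well-understood geometry of genus-one and genus-two fibrations suffice to pin down the sign, so in all the stated cases the computation of $\kappa^s$ reduces cleanly to $\kappa^l(g,h,n)$.
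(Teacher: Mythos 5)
Your outline handles $g=0$, $g=1$, and the bundle case $(g\ge 2,h=1,n=0)$ in essentially the paper's way (adjunction/rational-ruled for $g=0$; Matsumoto's classification of elliptic fibrations, as in Proposition \ref{1h}, for $g=1$; Meyer's signature vanishing, as in Proposition \ref{bdlT2}, for bundles over $T^2$). But the central case $g\ge 2$, $h\ge 2$ is left as an admitted gap: you say one must ``genuinely rule out $K_\omega^2\le 0$'' and that ``this is where the real work lies,'' offering only the heuristic that $K_\omega$ pairs positively in fiber and base directions as in $\Sigma_g\times\Sigma_h$. That heuristic does not constitute an argument --- as you yourself note, $\sigma(M)$ is not a function of $(g,h,n)$ --- so the proposal does not prove the main case of the theorem. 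The missing idea is that one should not try to read off the sign of $K_\omega^2$ from the fibration data at all: the paper instead invokes subadditivity of the symplectic Kodaira dimension for Lefschetz fibrations, $\kappa^s(M)\ge\kappa^s(\Sigma_g)+\kappa^s(\Sigma_h)$ (Lemma \ref{subadd}(2), which is Li's symplectic Parshin--Arakelov inequality \cite{L3}). For $g,h\ge 2$ this gives $\kappa^s(M)\ge 1+1=2$ immediately, and since $2$ is maximal the case closes; the positivity $K_\omega^2>0$ for the minimal model then comes for free from Definition \ref{symp Kod}, rather than being an input. Your diagnosis that uncontrolled signature is the obstruction is accurate only in the excluded exceptional range $(>2,1,\ge 1)$; for $h\ge 2$ it is a misdiagnosis that subadditivity bypasses entirely.

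A secondary soft spot: for $(g,h,n)=(2,1,>0)$ you appeal vaguely to ``well-understood geometry of genus-two fibrations.'' The paper's Proposition \ref{he} makes this precise: every genus-$2$ fibration is hyperelliptic, and Endo's signature formula \cite{E} (Eq. \ref{Endo}) yields
\begin{equation*}
K^2 \;=\; 3\sigma(M)+2\chi(M) \;=\; \frac{g-1}{2g+1}\,\mathfrak a \;+\; \sum_{h=1}^{[g/2]}\frac{6h(g-2h)+2g(h-1)+(4gh-1)}{2g+1}\,\mathfrak s_h \;>\;0
\end{equation*}
whenever $n=\mathfrak a+\mathfrak s>0$, so $\kappa^s=2$ there. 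You should also note that the lower bound $\kappa^s\ge 1$ used throughout (Corollary \ref{sub}) again comes from Lemma \ref{subadd}, not merely from $K_\omega\cdot[F]=2g-2>0$ for one adapted form; and in the $g=1$ case the paper needs Moishezon's result that elliptic Lefschetz fibrations with a singular fiber are holomorphic before the complex classification can be quoted. In short: the skeleton of your case division matches the paper, but without the subadditivity lemma the theorem's main case remains unproved.
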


Finally, we conjecture the equivalence of the dimensions in the exceptional case, providing some evidence for the conjecture.  This also has interesting connections to work by Amoros, et. al. \cite{AB}.

The results of Gompf and Matsumoto show that most Lefschetz fibrations admit a symplectic or complex structure. Therefore Lefschetz fibrations are a nice structure to research, especially as we hope that this will lead to a general definition
of Kodaira dimension for a much larger class of smooth
4-manifolds, and ultimately hopefully to a definition for broken Lefschetz fibrations (It has recently been shown that every smooth four manifold admits a broken Lefschetz fibration (\cite{AK}, \cite{Lek})). Additionally, this may give insight into a possible
extension of the symplectic Kodaira dimension to higher
dimensions.

It is worth noting at this juncture, that LeBrun has researched the connection between the Yamabe invariant of a smooth four manifold and Kodaira dimension on complex compact manifolds (see \cite{LB1}, \cite{LB2} and \cite{LB3}). This provides a possible generalization of Kodaira dimension to smooth 4-manifolds. We should mention that the equivalence of this ``Yamabe Kodaira dimension'' and symplectic Kodaira dimension is still not clear.

The flavor of Section \ref{lef0} is somewhat different from the previous
Sections. Here we attempt to give a combinatorial
definition of Kodaira dimension in the $h=0$ case, i.e. when the base is $\mathbb P^1$. Less is known in this (most
interesting, see Theorem \ref{symp}) case. However, this kind of combinatorial definition
may be generalized to more complicated fibration structures. 

The
authors hope to draw attention to the various
possible generalizations of Kodaira dimension in 4-dimensions.  They furthermore hope this will lead to a deeper understanding of the concept of Kodaira dimension and lead to useful definitions for smooth four-manifolds and manifolds of higher dimension.

{\bf Acknowledgments}  We would like to thank our thesis adviser
Professor T.-J. Li for many helpful comments and discussions.  We also thank M. Usher for his interest in our work.

We would like to express our gratitude to the referees, whose careful reading and very useful suggestions greatly improved the paper.

\section{The Kodaira Dimension}
In this section we will review the definitions of the Kodaira dimension for complex and symplectic 4-manifolds. Furthermore, we define the Kodaira dimension for Lefschetz fibrations.  We begin with the classic definition on a manifold admitting a complex structure and then consider manifolds admitting symplectic structures and Lefschetz fibrations with  positive genus  base.   Throughout, we include some relevant facts in preparation for the proofs of Theorems \ref{main1}, \ref{main2} and \ref{main3}.

\subsection{Complex Kodaira dimension}  If the manifold $M$ admits a complex structure $J$, then the
Kodaira dimension is defined as follows (in the case dim$_\R
M=4$, see Def \ref{2dim} for the 2 dimensional case):  The n-th plurigenus $P_n(M,J)$ of a complex manifold is defined
by $P_n(M,J)=h^0({\mathcal K}_J^{\otimes n})$, with ${\mathcal
K}_J$ the canonical bundle of $(M,J)$.  We denote by $c_1=c_1(X,J)$ the first Chern class of the complex manifold $(X,J)$. 

\begin{definition}The complex Kodaira dimension $\kappa^{h}(M,J)$ is defined as

\[
\kappa^{h}(M,J)=\left\{\begin{array}{cl} 
-\infty &\hbox{ if $P_n(M,J)=0$ for all $n\ge 1$},\\
0& \hbox{ if $P_n(M,J)\in \{0,1\}$, but $\not\equiv 0$ for all $n\ge 1$},\\
1& \hbox{ if $P_n(M,J)\sim cn$; $c>0$},\\
2& \hbox{ if $P_n(M,J)\sim cn^2$; $c>0$}.\\
\end{array}\right.
\]

\end{definition}

Let $(M,J)$ be a complex surface.  Then $M$ is called minimal if it does not contain a nonsingular rational curve of self-intersection $-1$.  A nonsingular surface $M_{min}$ is called a minimal model of $M$ if it is minimal and if $M$ can be obtained from $M_{min}$ by blowing up a finite collection of points.  It is known, that every compact nonsingular complex surface $M$ has a minimal model $M_{min}$ and, if $\kappa^h(M,J)\ge 0$, then the minimal model is unique up to isomorphism.  The classification of minimal compact complex surfaces is called the Enriques-Kodaira classification:

\begin{theorem}(\cite{K} or \cite{BPV}) \label{Kodaira}
Let $(M,J)$ be a minimal complex surface.  Denote by
$p_g$ the geometric genus of $M$ and $q$ the irregularity of $M$.  Then $(M,J)$ is classified according to the following table:  

\begin{tabular}{|c|c|c|}\hline
&Class&$\kappa^h$\\\hline
    1 &algebraic surfaces with $p_g=0$ & $-\infty, 0, 1, 2$\\\hline 
    2 &  K3 surfaces &$0 $\\\hline
    3 & complex tori (of dimension 2) &$ 0$\\\hline
    4 &  elliptic surfaces with $b_1$ even, $p_g
    \geqslant 1, c_1\neq 0$ & $0,1$\\\hline
   5 &  algebraic surfaces with $p_g \geqslant 1, c_1\geqslant
    0$, $c_1^2>0$ & $2$\\\hline
   6 &   elliptic surfaces with $b_1$ odd, $p_g \geqslant
    1$, $c_1^2=0$ & $0,1$\\\hline
    7 &  surfaces with $b_1=q=1, p_g=0$ & $-\infty$\\\hline
\end{tabular}
 
\end{theorem}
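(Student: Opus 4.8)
The plan is to reduce every minimal surface to one of the seven listed types by a case analysis on the numerical invariants of the canonical class $K$ (equivalently $-c_1$), organized according to the Kodaira dimension. The two workhorses throughout are the Riemann--Roch theorem for surfaces, $\chi(\mathcal O_X(D)) = \chi(\mathcal O_X) + \tfrac12 D\cdot(D-K)$, together with Noether's formula $\chi(\mathcal O_X) = \tfrac1{12}(c_1^2 + c_2)$, and the Castelnuovo contractibility criterion, which shows that minimality is exactly the absence of $(-1)$-curves and makes these invariants birationally meaningful. Since $\kappa^h$ is read off from the plurigenera $P_n = h^0(nK)$, the strategy is to prove that each prescribed range of $(p_g, q, b_1, c_1^2)$ forces the stated value(s) of $\kappa^h$ and the stated geometric type.

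First I would treat the surfaces with $\kappa^h = -\infty$, which populate class 7 and part of class 1. The key input is Castelnuovo's rationality criterion: a surface with $P_2 = q = 0$ is rational. More generally, Enriques' theorem characterizes birationally ruled surfaces as exactly those with $P_{12} = 0$; combined with the classification of minimal ruled surfaces over a base curve, this disposes of everything with negative Kodaira dimension. The surfaces of class 7, with $b_1 = q = 1$ and $p_g = 0$, form the genuinely non-K\"ahler stratum (Kodaira's class VII), and here one must argue separately that the minimal such surfaces indeed have $P_n = 0$ for all $n \geq 1$.

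Next come the intermediate cases $\kappa^h = 0$ and $\kappa^h = 1$. For $\kappa^h = 0$ the point is that $K$ is numerically trivial, and in fact $12K$ is linearly trivial; analyzing the torsion of $K$ together with the values of $(p_g, q, b_1)$ separates the minimal models into K3 surfaces, complex tori, Enriques surfaces, bielliptic surfaces and, in the $b_1$ odd case, Kodaira surfaces, which account for classes 2 and 3 and the $\kappa^h = 0$ entries of classes 4 and 6. For $\kappa^h = 1$ one shows that for large $n$ the pluricanonical map sends $M$ onto a curve whose generic fiber is an elliptic curve, producing the elliptic fibration that names classes 4 and 6. Finally, class 5, where $p_g \geq 1$ and $c_1^2 > 0$, consists of surfaces of general type, for which the Riemann--Roch growth estimate $P_n \sim \tfrac12 c_1^2 n^2$ yields $\kappa^h = 2$.

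I expect the main obstacle to be the even/odd dichotomy in $b_1$ and its interaction with K\"ahler geometry, which is precisely what distinguishes classes 4 and 6. One must show that a minimal surface with $b_1$ odd and $\kappa^h \geq 0$ is necessarily elliptic, while $b_1$ even corresponds to the K\"ahler (hence projective, K3, or torus) situation; this rests on Kodaira's deep structure theory for elliptic fibrations and on the fact that a surface is K\"ahler if and only if $b_1$ is even. Handling these non-algebraic and non-K\"ahler strata rigorously --- rather than the algebraic cases, where Castelnuovo--Enriques theory and Riemann--Roch already suffice --- is the technical heart of the classification.
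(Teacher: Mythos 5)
The paper offers no proof of this statement: it is the classical Enriques--Kodaira classification, quoted directly from \cite{K} and \cite{BPV}. Your outline --- Castelnuovo's rationality criterion and Enriques' $P_{12}$-theorem for $\kappa^h=-\infty$, Riemann--Roch with Noether's formula, the torsion analysis of $K$ for $\kappa^h=0$, pluricanonical elliptic fibrations for $\kappa^h=1$, the growth $P_n\sim\tfrac12 c_1^2 n^2$ for general type, and Kodaira's structure theory for the non-K\"ahler, $b_1$-odd strata --- is precisely the standard proof found in those cited references, so it takes essentially the same approach the paper relies on.
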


The first five classes  admit K\"ahler structures. The seventh
class of surfaces all have $b_2=0$ and hence admit no symplectic structures.  Details can be found in \cite{BPV}.

\subsection{Symplectic Kodaira dimension} Let $(M,\omega)$ be a symplectic manifold.  The symplectic canonical
class $K_\omega$ is the first Chern class of the cotangent bundle with any $\omega$-compatible almost complex structure. The set of $\omega$-compatible almost complex structures is nonempty and contractible, so $K_\omega$ is well-defined.  A symplectic manifold is called minimal if it does not contain a
symplectic embedded sphere with self intersection $-1$.  A symplectic manifold $(M,\omega)$ is symplectically minimal if and only if $M$ is smoothly minimal.

The
symplectic Kodaira dimension of a 2-manifold is defined as follows:

\begin{definition}\label{2dim}\[
\kappa^s(M,\omega)=\left\{\begin{array}{cc}
-\infty & \hbox{if $K_{\omega}<0$},\\
0& \hbox{ if $K_{\omega}=0$},\\
1& \hbox{ if $K_{\omega}> 0$}.
\end{array}\right.
\]
\end{definition}

Clearly, $\kappa(M,\omega)=-\infty$, $0$ or $1$ if and only if the
genus is 0, 1 or $\geqslant 2$.  These coincide with the complex Kodaira
dimension if dim$_\mathbb RM=2$.

A similar definition can be made for 4-manifolds, the symplectic Kodaira dimension $\kappa^s(M,\omega)$ is defined by Li \cite{L} (see also \cite{LB1}, \cite{MS}) to be:

{\definition \label{symp Kod}
For a minimal symplectic $4-$manifold $M$ with symplectic form $\omega$
and
symplectic canonical class $K_{\omega}$ the Kodaira dimension of $(M,\omega)$
 is defined in the following way:

\[
\kappa^s(M,\omega)=\left\{\begin{array}{cc}
-\infty & \hbox{if $K_{\omega}\cdot \omega<0$ or $K_{\omega}\cdot K_{\omega}<0$},\\
0& \hbox{ if $K_{\omega}\cdot \omega=0$ and $K_{\omega}\cdot K_{\omega}=0$},\\
1& \hbox{ if $K_{\omega}\cdot \omega> 0$ and $K_{\omega}\cdot K_{\omega}=0$},\\
2& \hbox{ if $K_{\omega}\cdot \omega>0$ and $K_{\omega}\cdot K_{\omega}>0$}.\\
\end{array}\right.
\]

The Kodaira dimension of a non-minimal manifold is defined to be
that of any of its minimal models.
}

 If the symplectic manifold carries a complex structure $J$ in addition to the symplectic structure, then we can define two classes:  The first Chern class $c_1(\mathcal K_J)$ of the canonical bundle $\mathcal K_J$ and the symplectic canonical class $K_\omega$. Please note that $c_1(\mathcal K_J)$ and $K_{\omega}$ may differ: If the manifold $M$ admits a K\"ahler structure and is minimal, then the first Chern class of the canonical bundle $\mathcal K_J$ is given by the canonical class $K_\omega$ of the K\"ahler form and it is unique up to diffeomorphism (see \cite{W} and \cite{FM}). If $J$ and $\omega$ are not compatible, then $c_1(\mathcal K_J)$ and $K_\omega$  are not necessarily equal. On a non-K\"ahler symplectic manifold there may be many symplectic canonical classes $K_\omega$ depending on the choice of symplectic structure $\omega$.  Hence the Kodaira dimension may depend on the choice of symplectic structure $\omega$.  The following Theorem addresses this issue:

\begin{theorem}\label{2.4}(Thm 2.4, \cite{L}) Let $M$ be a closed oriented smooth four manifold and $\omega$ an orientation compatible symplectic form on $M$.  Let $(M,\omega)$ be minimal. 
\begin{enumerate}
\item The Kodaira dimension is well defined.
\item $\kappa^s(M,\omega)$ only depends on the oriented diffeomorphism type of $M$, hence $\kappa^s(M,\omega)=\kappa^s(M)$.
\item $\kappa^s(M)=-\infty$ if and only if $M$ is rational or ruled.
\item $\kappa^s(M)=0$ if and only if $K_\omega$ is a torsion class.
\end{enumerate}
\end{theorem}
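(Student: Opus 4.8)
The plan is to treat the four assertions in an order that isolates a single analytic input — Taubes' representability of the canonical class — and then deduces the topological and linear-algebraic statements from it. I would first record the one purely formal point. Since $\omega$ is symplectic we have $\omega\cdot\omega>0$, so $\omega$ lies in the positive cone of the intersection form, and $K_\omega^2=c_1^2=2\chi(M)+3\sigma(M)$ is determined by the oriented homotopy type; in particular the sign of $K_\omega^2$ is already a diffeomorphism invariant. Inspecting Definition \ref{symp Kod}, the only combination of signs of $(K_\omega\cdot\omega,\,K_\omega^2)$ that is left unassigned is $K_\omega\cdot\omega=0$ together with $K_\omega^2>0$; hence to prove (1) (that $\kappa^s$ is well defined, i.e. the clauses are exhaustive and mutually exclusive) it suffices to rule out this single case. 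When $b^+(M)=1$ this is immediate from the light-cone (Hodge index) lemma, since a class of positive square cannot be $\omega$-orthogonal to $\omega$ in signature $(1,b^-)$. In general it will follow from the sharper fact established below in (4).

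The analytic engine is Taubes' theorem: on a symplectic $4$-manifold with $b^+>1$ the canonical class $K_\omega$ has Gromov invariant $\pm1$ and is therefore represented by an embedded (possibly disconnected) $\omega$-symplectic curve $C$, every nonempty component of which has strictly positive area. I would use this first to prove the forward direction of (4). If $\kappa^s=0$, so $K_\omega\cdot\omega=0$, then $\mathrm{area}(C)=\int_C\omega=[\omega]\cdot K_\omega=0$, which forces $C=\emptyset$ and hence $K_\omega$ torsion; the converse direction of (4) is trivial, as a torsion class has $K_\omega\cdot\omega=K_\omega^2=0$. This same implication ``$K_\omega\cdot\omega=0\Rightarrow K_\omega$ torsion $\Rightarrow K_\omega^2=0$'' is exactly what rules out the case $K_\omega\cdot\omega=0,\,K_\omega^2>0$ for $b^+>1$, completing (1). (The $b^+=1$ case of these statements is handled by the light-cone lemma together with the classification of small $b^+$ symplectic manifolds.)

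With (1) and (4) available, I would reduce (2) to (3). The sign of $K_\omega^2$ is topological, so the only datum to control is the trichotomy $K_\omega\cdot\omega\in\{<0,=0,>0\}$, and each case admits an intrinsic description: $K_\omega\cdot\omega<0$ (together with the subcase $K_\omega^2<0$) corresponds to $M$ being rational or ruled by (3); $K_\omega\cdot\omega=0$ corresponds to $K_\omega$ torsion by (4); the remaining alternative is $K_\omega\cdot\omega>0$. Since ``rational or ruled'' and ``torsion canonical class'' are oriented-diffeomorphism-invariant conditions, $\kappa^s(M,\omega)$ depends only on the oriented diffeomorphism type, giving (2). It remains to prove (3). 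The reverse implication is a direct computation: on $\mathbb{CP}^2$ and on the $S^2$-bundles over a surface $\Sigma_g$ one checks $K_\omega\cdot\omega<0$ (or $K_\omega^2=8(1-g)<0$ when $g\ge 2$), so these minimal models have $\kappa^s=-\infty$.

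The hard part, and the step I expect to be the principal obstacle, is the forward implication of (3): that a minimal symplectic $4$-manifold with $K_\omega\cdot\omega<0$ or $K_\omega^2<0$ must be rational or ruled. This is the symplectic analogue of the characterization of rational and ruled complex surfaces and requires the full strength of Seiberg--Witten theory, via Taubes' nonvanishing results and Liu's wall-crossing arguments. The delicate regime is $b^+=1$, where one must control the chamber structure of the Seiberg--Witten moduli space and apply the wall-crossing formula carefully to eliminate every non-ruled possibility. Once this deep input is granted, all of the remaining steps are either elementary lattice arithmetic or formal consequences of the Taubes representative of $K_\omega$ constructed above.
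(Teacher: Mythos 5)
The paper itself does not prove this theorem: it is quoted verbatim from Li's paper \cite{L} (Theorem 2.4 there), and the only proof-level content in the present paper is the remark immediately following the statement, which observes that part (1) amounts to excluding the single sign pattern $K_\omega\cdot\omega=0$, $K_\omega^2>0$. Your treatment of (1) agrees exactly with that remark, and your mechanism for the exclusion --- the light-cone lemma when $b^+=1$, and for $b^+>1$ Taubes' nonvanishing of the Seiberg--Witten invariant of $K_\omega$ plus positivity of area forcing $K_\omega$ to be torsion when $K_\omega\cdot\omega=0$ --- is the standard argument underlying Li's proof. Your proof of (4) for the \emph{given} form $\omega$ is likewise correct; note that for $b^+=1$ the light-cone lemma alone already gives the forward direction (the orthogonal complement of $\omega$ is negative definite, so $K_\omega\cdot\omega=0$ and $K_\omega^2=0$ force $K_\omega$ torsion), so your appeal to a classification there is unnecessary.

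The genuine gap is in your deduction of (2). You claim (2) is a formal consequence of (3) and (4) because ``rational or ruled'' and ``torsion canonical class'' are oriented-diffeomorphism-invariant conditions. But what you actually proved in (4) is ``$\kappa^s(M,\omega)=0$ iff $K_\omega$ is torsion'' for the given $\omega$; the condition ``$K_\omega$ is torsion'' depends a priori on $\omega$, since distinct symplectic forms on $M$ can have distinct canonical classes, so asserting its $\omega$-independence is precisely the content of (2), not an input to it. Closing this requires a further Seiberg--Witten argument: for $b^+>1$, if some $K_{\omega_1}$ is torsion then \emph{every} basic class $e$ is torsion (by Taubes' SW$\Rightarrow$Gr the class $\frac{1}{2}(e+K_{\omega_1})$ is represented by a symplectic curve; applying this to both $\pm e$ forces $e\cdot\omega_1=0$, and positivity of area forces the curve to be empty), and since $K_{\omega_2}$ is itself a basic class it is then torsion; the $b^+=1$ case needs the Li--Liu wall-crossing analysis. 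A smaller instance of the same problem sits in your converse of (3): checking $K_\omega\cdot\omega<0$ ``by direct computation'' on $\mathbb{CP}^2$ and the $S^2$-bundles only treats the standard forms, whereas the definition requires the inequality for an \emph{arbitrary} symplectic form on these manifolds (for base genus $g\ge 2$ the topological identity $K_\omega^2=8(1-g)<0$ does suffice); identifying the canonical class of an arbitrary form on $\mathbb{CP}^2$ or on an $S^2$-bundle over $S^2$ or $T^2$ again requires Taubes and Li--Liu, not arithmetic. So your architecture matches the source \cite{L}, but the diffeomorphism-invariance step is asserted exactly where the basic-class argument is needed.
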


{\bf Remark:} The definition of the symplectic Kodaira dimension $\kappa^s(M,\omega)$ contains all possible combinations of $K_\omega\cdot K_\omega$ and $K_\omega\cdot \omega$ save one:  It was shown in \cite{L} that the pairing $K_\omega\cdot K_\omega>0$ and $K_\omega\cdot \omega=0$ is not possible.  The first statement of Thm. \ref{2.4} follows from this.

As in the complex category, certain results on minimal models hold:  In the symplectic category there exist minimal models, and for surfaces with $\kappa^s(M)\ge 0$ these are unique up to isomorphism.  Moreover, if $\kappa^s(M)=-\infty$, then there exist at most two minimal models up to diffeomorphism. 

To determine the Kodaira dimension of a symplectic manifold, it is necessary to determine the value of $K_{\omega}^2$.  Assume $M$ admits an almost complex structure.  Let $\chi$ denote the Euler characteristic and let $\sigma$ denote the signature of the manifold $M$.   Then 
\begin{equation}\label{K2}
K_\omega^2=2\chi+3\sigma.
\end{equation}  

Actually, what we really used in this paper for the definition of symplectic Kodaira dimension is a combination of Definition \ref{symp Kod} and Theorem \ref{2.4}.

\begin{cor}  \label{app}Let $M$ be a closed oriented smooth four manifold and $\omega$ an orientation compatible symplectic form on $M$.  Let $(M,\omega)$ be minimal.  $M$ is said to have symplectic Kodaira dimension
$\kappa^s(M)=-\infty$ if $M$ is rational or ruled. 
Otherwise, the Kodaira dimension $\kappa^s(M)$ of $M$ is defined in terms of $K_\omega$ as follows:
\[
\kappa^s(M)=\left\{\begin{array}{cc}
0& \hbox{ if $K_\omega$ is torsion },\\
1& \hbox{ if $K_\omega$ is non-torsion
but $K_\omega^2=0$},\\
2& \hbox{ if $K_\omega^2>0$}.\\
\end{array}\right.
\]
For a non-minimal $M$, $\kappa^s(M)$ is defined to be $\kappa^s(N)$, where
$N$ is a minimal model of $M$. 
\end{cor}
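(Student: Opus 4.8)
The plan is to show that the characterization given here is equivalent to Definition \ref{symp Kod}, using the structural results of Theorem \ref{2.4}. Since both prescriptions assign a value to each minimal $(M,\omega)$, it suffices to check that they agree case by case on the sign data $(K_\omega\cdot\omega,\ K_\omega^2)$. The value $-\infty$ is immediate: by Theorem \ref{2.4}(3), $M$ is rational or ruled precisely when $\kappa^s(M)=-\infty$, so the clause separating off the rational or ruled case reproduces exactly the $-\infty$ stratum of Definition \ref{symp Kod}. Likewise, Theorem \ref{2.4}(4) identifies the locus where $K_\omega$ is a torsion class with $\kappa^s(M)=0$; since a torsion class has vanishing image in real cohomology, it satisfies $K_\omega\cdot\omega=0$ and $K_\omega^2=0$, consistent with the $(0,0)$ stratum of Definition \ref{symp Kod}.

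First I would isolate the remaining content, namely the distinction between Kodaira dimensions $1$ and $2$. Assume $(M,\omega)$ is minimal, not rational or ruled, and that $K_\omega$ is non-torsion. The key step is to verify that under these hypotheses one automatically has $K_\omega\cdot\omega>0$. Indeed, if $K_\omega\cdot\omega<0$ then Definition \ref{symp Kod} gives $\kappa^s=-\infty$, contradicting that $M$ is not rational or ruled via Theorem \ref{2.4}(3). If instead $K_\omega\cdot\omega=0$, I would rule out each possibility for $K_\omega^2$: the case $K_\omega^2<0$ again forces $\kappa^s=-\infty$; the case $K_\omega^2=0$ lands in the $(0,0)$ stratum, hence $\kappa^s=0$, which by Theorem \ref{2.4}(4) makes $K_\omega$ torsion, contrary to assumption; and the case $K_\omega^2>0$ is exactly the pairing excluded in \cite{L} (see the Remark following Theorem \ref{2.4}). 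Thus $K_\omega\cdot\omega=0$ is impossible, leaving $K_\omega\cdot\omega>0$.

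With positivity of $K_\omega\cdot\omega$ in hand, the remaining cases follow directly from Definition \ref{symp Kod}. Since $K_\omega^2<0$ would force $\kappa^s=-\infty$, under the standing hypotheses we have $K_\omega^2\ge 0$, so the two subcases $K_\omega^2=0$ and $K_\omega^2>0$ are exhaustive and account precisely for the non-torsion branch of the statement (note that $K_\omega^2>0$ itself already forces $K_\omega$ non-torsion). When $K_\omega^2=0$ we are in the $(+,0)$ stratum and $\kappa^s=1$; when $K_\omega^2>0$ we are in the $(+,+)$ stratum and $\kappa^s=2$. These match the corresponding clauses, completing the equivalence. The only nonroutine ingredient is the impossibility of the pairing $K_\omega\cdot\omega=0$ with $K_\omega^2>0$, which is the main obstacle and is supplied by the cited result of Li; everything else is bookkeeping on the sign table together with parts (3) and (4) of Theorem \ref{2.4}.
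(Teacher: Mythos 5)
Your proof is correct and follows exactly the route the paper intends: the paper states this Corollary without a written proof, presenting it as ``a combination of Definition \ref{symp Kod} and Theorem \ref{2.4}'' together with the Remark that the pairing $K_\omega\cdot K_\omega>0$, $K_\omega\cdot\omega=0$ is excluded by \cite{L}. Your case-by-case sign bookkeeping, using Theorem \ref{2.4}(3) and (4) plus that excluded pairing, is precisely the verification the paper leaves implicit.
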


\subsection{The Kodaira dimension of Lefschetz fibrations}

The notion of a Lefschetz fibration is important for both symplectic and complex manifolds.  In particular, it can be viewed as a
topological characterization of symplectic manifolds. In the
following we present a definition of the Kodaira dimension for
Lefschetz fibrations when the base is not $S^2$. 

\subsubsection{Lefschetz Fibrations}
We begin with an overview of Lefschetz fibrations and their
connection to symplectic manifolds.

\begin{definition} \label{lf} A $(g,h)$ Lefschetz fibration on a
compact, connected, oriented smooth 4-manifold $M$ is a map $\pi:
M\rightarrow \Sigma_h$, where $\Sigma_h$ is a compact, connected,
oriented genus $h$ 2-manifold and $\pi^{-1}(\partial
\Sigma_h)=\partial M$, such that

\begin{itemize}
\item the set of critical points of $\pi$ is isolated and lies in
the interior of $M$; 
\item for any critical point $x$ there are
local complex coordinates $(z_1,z_2)$ compatible with the orientations on $M$ and $\Sigma_h$ such that
$\pi(z_1,z_2)=z_1^2+z_2^2$, 
\item $\pi$ is injective on the set of critical points and 
\item a regular fiber is a compact, connected,
oriented genus $g$ 2-manifold.
\end{itemize}
\end{definition}

Let $n$ denote
the number of singular points.  A singular fiber is a transversally immersed
surface with a single positive double point.  If there
are no critical points ($n=0$), then $\pi:M\rightarrow \Sigma_h$ is just a
surface bundle.    

A $(g,h)$ Lefschetz fibration is called relative minimal if no
fiber contains a sphere of self intersection $-1$.  

\begin{lemma}(\cite{S}) For $(g,h)$ Lefschetz fibrations with $h\ge 1$, relative minimal is
equivalent to $M$ minimal.
\end{lemma}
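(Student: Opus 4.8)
The plan is to prove the two implications separately. The implication ``$M$ minimal $\Rightarrow$ relatively minimal'' is immediate: if some fiber contained an embedded sphere $S$ with $S\cdot S=-1$, then $S$ would itself be an embedded $(-1)$-sphere in $M$, contradicting minimality. So the entire content lies in the converse, which I would restate as the geometric claim: \emph{when $h\ge 1$, every smoothly embedded sphere $E\subset M$ with $E\cdot E=-1$ is contained in a single fiber.} Granting this, suppose $\pi\colon M\to\Sigma_h$ is relatively minimal but $M$ is not minimal; then a $(-1)$-sphere $E$ would lie in some fiber $\pi^{-1}(p)$, so that fiber would contain a $(-1)$-sphere, contradicting relative minimality. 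Hence $M$ is minimal.

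To prove the claim, I would first extract the homological input coming from $h\ge 1$. The composite $\pi|_E\colon E\cong\mathbb{P}^1\to\Sigma_h$ is a map out of a simply connected space, so it lifts to the universal cover $\widetilde{\Sigma_h}$, which for $h\ge 1$ is contractible (diffeomorphic to $\mathbb{R}^2$, either directly for $h=1$ or via $\mathbb{H}^2\cong\mathbb{R}^2$ for $h\ge 2$). Therefore $(\pi|_E)_*$ factors through $H_2(\widetilde{\Sigma_h})=0$, the degree of $\pi|_E$ vanishes, and consequently
\[
E\cdot F=\deg(\pi|_E)=0,
\]
where $F$ is the class of a regular fiber; that is, $E$ is homologically ``vertical.'' This also isolates why $h\ge 1$ is essential: for $h=0$ a $(-1)$-section would have $E\cdot F=1$ and the claim would fail.

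The remaining, geometric step is to upgrade ``homologically vertical'' to ``contained in a fiber,'' and here I would pass to pseudoholomorphic geometry. Equip $M$ with an almost complex structure $J$ adapted to the fibration in the Gompf--Thurston sense, so that $\pi$ is $(J,j)$-holomorphic for a complex structure $j$ on $\Sigma_h$, every fiber is $J$-holomorphic, and $J$ tames a symplectic form. If $E$ is represented by a $J$-holomorphic curve, then its image under $\pi$ is a holomorphic curve in $\Sigma_h$; since a nonconstant holomorphic map $\mathbb{P}^1\to\Sigma_h$ cannot exist for $h\ge 1$ (Riemann--Hurwitz would force $2=\chi(\mathbb{P}^1)\le\deg\cdot\chi(\Sigma_h)\le 0$), and since positivity of intersections forbids any cancellation against $E\cdot F=0$, every component of the representative must lie in a fiber. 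This forces $E$ into a single fiber and completes the argument.

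The main obstacle is precisely producing a $J$-holomorphic representative for the \emph{fibration-adapted} $J$, which is not generic. I would resolve this by combining Taubes' theory with a limiting argument: by adjunction $[E]\cdot[E]=-1$ and $K_\omega\cdot[E]=-1$, so $[E]$ is an exceptional class with nonzero Gromov invariant and is represented by an embedded $J'$-holomorphic sphere for generic tamed $J'$. Choosing $J'_k\to J$ and applying Gromov compactness yields a $J$-holomorphic stable curve in the class $[E]$; each component is $J$-holomorphic, and $[E]\cdot F=0$ together with positivity forces each component to be vertical, so the limit is supported in fibers. The delicate points are to check that no fiber-intersection is hidden in the nodes and that the vertical limit genuinely produces a $(-1)$-sphere inside one fiber rather than a reducible vertical configuration; tracking the exceptional class through the compactification is where the real work lies. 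For an honestly smooth (non-symplectic) statement one would instead invoke Smith's direct isotopy argument, pushing $E$ into a fiber along the monodromy of the fibration.
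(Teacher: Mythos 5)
The paper contains no argument for this lemma at all: it is quoted directly from Stipsicz \cite{S}, so the only meaningful comparison is with the proof in the literature, and your reconstruction follows essentially that known strategy. The trivial direction, the key homological observation that $\pi_2(\Sigma_h)=0$ for $h\ge 1$ forces $[E]\cdot F=\deg(\pi|_E)=0$, the Gompf--Thurston adapted almost complex structure, the Taubes/Li--Liu representation of an exceptional class (Li--Liu being the needed reference when $b^+=1$, which is the typical case here), and Gromov compactness to make the representative vertical -- this skeleton is correct and is how the result is actually proved.

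Two soft spots, both fixable. First, the endgame you defer as ``where the real work lies'' does not require producing an embedded $(-1)$-sphere inside a fiber, and indeed the limit need not be one; what the limit gives is purely homological: $[E]=\sum_i m_i[C_i]$ with $m_i\ge 0$ and the $C_i$ components of a \emph{single} fiber (each genus-zero component of the stable curve maps holomorphically, hence constantly, to $\Sigma_h$, and connectedness puts them all in one fiber). Relative minimality then enters by excluding sphere components: if a fiber $A\cup B$ is reducible, then $A\cdot F=0$ and $F^2=0$ force $A^2=B^2=-1$, so a genus-zero component would itself be a $(-1)$-sphere in a fiber; hence every component has genus $\ge 1$ and $K_\omega\cdot C_i=2g_i-2-C_i^2\ge 0$ (equal to $2g-2\ge 0$ for the irreducible fiber class, and to $2g_i-1\ge 1$ for components of reducible fibers). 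Then $K_\omega\cdot[E]=\sum_i m_iK_\omega\cdot C_i\ge 0$ contradicts $K_\omega\cdot[E]=-1$, and the leftover case $g=0$ dies because $[E]=m[F]$ has square $0\ne -1$. So no delicate tracking of the exceptional class through nodes is needed. Second, your symplectic input requires $[F]\ne 0$, which fails precisely for certain torus bundles, and your fallback to ``Smith's direct isotopy argument'' is not a citation I can substantiate; but that case is elementary: for $g\ge 1$, $n=0$ the total space is aspherical, so any embedded sphere is null-homotopic, has square $0$, and $M$ is automatically minimal. A final quibble: $K_\omega\cdot[E]=-1$ for a merely \emph{smoothly} embedded $(-1)$-sphere is not ``adjunction'' -- adjunction applies to symplectic or holomorphic representatives; for a smooth one it is the content of the Taubes/Li--Liu theorem, after possibly replacing $E$ by $-E$.
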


 The total space $M$ may admit symplectic structures $\omega$ or complex structures $J$, however these may not be compatible with the fibration structure.  This leads to the following definition.
 
\begin{definition}\begin{enumerate}
\item  A $(g,h)$ Lefschetz fibration is called symplectic if
there exists a symplectic form $\omega$ on $M$ such that, for any
$p\in \Sigma$, $\omega$ is nondegenerate at each smooth point of
the fiber $F_p$ and at each double point, $\omega$ is
nondegenerate on the two planes contained in the tangent cone.

\item A $(g,h)$ Lefschetz fibration is called holomorphic if there exists a complex structures $J$ and $j$ on $M$ resp. $\Sigma_h$ such that the map $\pi: (M,J)\rightarrow (\Sigma_h,j)$ is holomorphic.
\end{enumerate}
\end{definition}

The following two results show the
intimate connection between Lefschetz fibrations and symplectic
manifolds:

{\theorem\label{symp}\
 \begin{enumerate}

\item \cite{D} If $(M,\omega)$ is a symplectic 4-manifold, then
there exists a nonnegative integer $n$ such that the $n$-fold
blowup $M\#n\overline{\mathbb{CP}^2}$ admits a symplectic $(g,0)$ Lefschetz
fibration over $S^2$.
\item \cite{GS} Suppose $M$ is a 4-manifold
admitting a $(g,h)$ Lefschetz fibration $\pi:M\rightarrow
\Sigma_h$.  If the fiber class $F\in H_2(M)$ is nontrivial, then
$M$ admits a symplectic Lefschetz fibration structure.  In
particular, if $g\ne 1$ then this result holds.
\end{enumerate}}

Thus, given a $(g,h)$ Lefschetz fibration with nontrivial fiber class, we can
construct a symplectic structure on the underlying smooth manifold $M$.  This
in turn allows us to identify $M$ with a $(k,0)$ Lefschetz fibration
after a finite number of blow-ups.  The following question is interesting in particular from
the viewpoint of Kodaira dimension, which is invariant under
blow-up:

{\question \label{Sep} What is the connection between $(g,h,n)$
and $(k,0,n')$?  }

We cannot hope to obtain a precise answer to this question, as the number of singular fibers $n'$ resulting from Donaldson's construction is not bounded above.  In particular, the $(k,0,n')$ Lefschetz fibration structure is not unique.  However, the genus $k$ and the value $n'$ depend on each other.  We will consider this topic in Section \ref{lef0}.

We note two important properties of Lefschetz fibrations:  Recall the Definition of the Kodaira dimension for a 2-manifold given in Def. \ref{2dim}.

\begin{lemma}\label{subadd}
Let $M$ be the total space of a $(g,h)$ Lefschetz fibration.
\begin{enumerate}
\item(\cite{Ka})Assume the manifold $M$ admits a compatible complex structure $J$, i.e. a $J$ makes the Lefschetz fibration holomorphic.    Then
\begin{equation}
\kappa^h(M,J)\ge \kappa^h(\Sigma_g)+\kappa^h(\Sigma_h).
\end{equation}
\item(\cite{L3}) Assume the manifold $M$ admits a symplectic structure $\omega$.    Then
\begin{equation}
\kappa^s(M)\ge \kappa^s(\Sigma_g)+\kappa^s(\Sigma_h).
\end{equation}
\end{enumerate}
\end{lemma}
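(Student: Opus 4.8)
The plan is to treat both inequalities as instances of the subadditivity of Kodaira dimension along the fibration $\pi:M\to\Sigma_h$, whose generic fiber is $\Sigma_g$. Since $\kappa(\Sigma_k)=-\infty,0,1$ according as $k=0$, $k=1$, $k\ge2$, the right-hand side equals $-\infty$ unless both $g\ge1$ and $h\ge1$, in which case there is nothing to prove; so throughout I assume $g\ge1$ and $h\ge1$. It also suffices to treat the relatively minimal case, since blowing down a $(-1)$-sphere in a fiber changes neither side (Kodaira dimension is a blow-up invariant and, by \cite{S}, relative minimality coincides with minimality of $M$ when $h\ge1$).

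For part (1), I would use the relative canonical bundle formula $K_M\cong\pi^{*}K_{\Sigma_h}\otimes K_{M/\Sigma_h}$. By the projection formula,
\[
P_n(M,J)=h^0\bigl(M,K_M^{\otimes n}\bigr)=h^0\bigl(\Sigma_h,\;K_{\Sigma_h}^{\otimes n}\otimes\E_n\bigr),\qquad \E_n:=\pi_*K_{M/\Sigma_h}^{\otimes n},
\]
where $\E_n$ is a vector bundle on the curve $\Sigma_h$ of rank $P_n(\Sigma_g)$, nef (so $\deg\E_n\ge0$) by the semipositivity of relative pluricanonical direct images. Riemann--Roch on $\Sigma_h$ gives $\chi\bigl(K_{\Sigma_h}^{\otimes n}\otimes\E_n\bigr)=\deg\E_n+(2h-2)n\,\mathrm{rk}\,\E_n+(1-h)\,\mathrm{rk}\,\E_n$. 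When $h\ge2$ the factor $2h-2>0$ drives the estimate: for $g\ge2$ one has $\mathrm{rk}\,\E_n=P_n(\Sigma_g)\sim2(g-1)n$ and the leading term is $\sim4(g-1)(h-1)n^2$, forcing $P_n(M,J)\sim n^2$ and $\kappa^h=2$; for $g=1$ one has $\mathrm{rk}\,\E_n=1$ and the degree grows linearly, giving $\kappa^h\ge1$. This settles every case with $h\ge2$; those with $h=1$ are borderline, since then $\deg K_{\Sigma_h}=0$ and the leading degree term disappears.

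The main obstacle is precisely this borderline: when the base is elliptic ($h=1$), or more generally when $\deg\E_n=0$, Riemann--Roch only yields $\chi=0$ and does not by itself produce sections. Here one must use that a nef bundle of degree $0$ on a curve is numerically flat, so that $\deg\E_n=0$ for all $n$ forces the fibration to be isotrivial; after a finite étale base change $M$ becomes a product $\Sigma_g\times\Sigma_h$ (or a free quotient thereof), and since Kodaira dimension is invariant under étale covers and additive on products one obtains $\kappa^h(M,J)=\kappa^h(\Sigma_g)+\kappa^h(\Sigma_h)$ directly. Making the semipositivity statement and the isotriviality dichotomy precise is exactly the content of Kawamata's Hodge-theoretic proof of the addition theorem over a curve (\cite{Ka}), and it is the step I expect to be delicate.

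For part (2) I would argue through the numerical criteria of Corollary \ref{app}, using that $\kappa^s$ is a smooth invariant computed on a minimal model (Theorem \ref{2.4}). Adjunction applied to a regular fiber, a symplectic surface of genus $g$ with $[F]^2=0$, gives $K_\omega\cdot[F]=2g-2\ge0$; for $g\ge2$ this already shows $K_\omega$ is non-torsion, hence $\kappa^s(M)\ge1$, while the complementary case $g=1,\,h\ge2$ detects non-triviality of $K_\omega$ from the positive-genus base (via a symplectic multisection, or by reducing to part (1) through Theorem \ref{main1} where a complex structure is present). To reach general type when $g,h\ge2$ I would evaluate $K_\omega^2=2\chi+3\sigma$ from \eqref{K2}, using $\chi(M)=(2-2g)(2-2h)+n=4(g-1)(h-1)+n$ together with the signature of the Lefschetz fibration, and verify $K_\omega^2>0$, which gives $\kappa^s=2$. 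As in part (1) the delicate point lives in the corner $g=h=1$, where one must exclude $\kappa^s=-\infty$, i.e. rule out that a relatively minimal genus-one fibration over a torus is rational or ruled; this is where Theorem \ref{2.4}(3) and the Seiberg--Witten input behind T.-J. Li's classification (\cite{L3}) are needed, and it is the one step that does not follow from the elementary intersection-number bookkeeping.
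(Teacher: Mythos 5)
The paper does not actually prove this lemma: both parts are quoted from the literature, part (1) from Kawamata \cite{Ka} and part (2) from Li \cite{L3}, so your attempt is measured against those sources rather than an in-paper argument. Your part (1) is essentially the standard Fujita--Kawamata argument (projection formula, semipositivity of $\pi_*K_{M/\Sigma_h}^{\otimes n}$, Riemann--Roch on the base curve, with the degree-zero/isotriviality dichotomy over an elliptic base deferred to the Hodge-theoretic core of \cite{Ka}), and since you outsource exactly the delicate step to the same reference the paper cites, that half is sound. One caveat worth adding: \cite{Ka} treats algebraic fiber spaces, while a holomorphic Lefschetz fibration may have non-K\"ahler total space (necessarily elliptic, class (6) of Theorem \ref{Kodaira}); there one should use Kodaira's canonical bundle formula for elliptic surfaces instead.

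Part (2), however, has genuine gaps. First, adjunction giving $K_\omega\cdot[F]=2g-2>0$ only shows $K_\omega$ is non-torsion, which rules out $\kappa^s=0$ but not $\kappa^s=-\infty$: minimal ruled surfaces contain embedded symplectic square-zero surfaces of every genus $g\ge 2$ (e.g.\ $\Sigma_g\times\{pt\}\subset\Sigma_g\times S^2$), so the existence of such a fiber alone cannot exclude the ruled case; excluding it requires the fibration structure together with the Seiberg--Witten classification behind Theorem \ref{2.4}(3), i.e.\ the actual content of \cite{L3}. Second, and more seriously, your plan for $g,h\ge 2$ --- computing $K_\omega^2=2\chi+3\sigma$ from Eq.\ \ref{K2} ``together with the signature of the Lefschetz fibration'' --- cannot be carried out by bookkeeping: there is no general formula or lower bound for $\sigma$ of a non-hyperelliptic Lefschetz fibration (Endo's formula, Eq.\ \ref{Endo}, is special to the hyperelliptic case), and the positivity you need is precisely a symplectic slope/Parshin--Arakelov-type inequality. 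The paper itself stresses that Xiao's inequality (Eq.\ \ref{xiao}) requires holomorphicity and that the symplectic analogue of Eq.\ \ref{sigmaeuler} over a torus base is open --- this is exactly why Conjecture \ref{c1} and the exceptional case exist. Over a base of genus $\ge 2$ the inequality does hold, but only via Li's SW-theoretic argument in \cite{L3}, which you invoke only for the $g=h=1$ corner; it is needed throughout part (2). Finally, your $g=1$, $h\ge 2$ fallback (``symplectic multisection'') is unsubstantiated; the honest route, used by the paper in Proposition \ref{1h}, is Matsumoto--Moishezon's result that elliptic Lefschetz fibrations with a singular fiber are holomorphic, combined with Meyer's signature vanishing for the bundle case.
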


In order to determine the Kodaira dimension of a symplectic or complex manifold $M$, we need to estimate the square of the canonical class.  Assume $M$ admits a  relative minimal holomorphic Lefschetz fibration (allowing  $h=0$ as well).  Define the holomorphic Euler characteristic $\chi_h=\frac{\chi+\sigma}{4}$.  Then Xiao \cite{X} proved the slope inequality for fibrations with fiber genus $g\ge 2$: 
\begin{equation}\label{xiao}
K^2-8(g-1)(h-1)\ge (4-\frac{4}{g})(\chi_h-(h-1)(g-1)).
\end{equation}

Note that the assumption that the fibration is holomorphic is vital for this result to hold.  See also the discussion in Section \ref{sympeq}.

\subsubsection{Kodaira dimension}
We present a definition
of the Kodaira dimension for Lefschetz fibrations with $h\ge 1$.  The case $h=0$ will be discussed in Section \ref{lef0}.  This
definition is purely combinatorial and should thus be extend able
to more general structures on $M$.  In the following Sections, we proceed to show the
equivalence of this definition with $\kappa^s(M)$ or $\kappa^h(M,J)$ when $M$ admits the
structure of a Lefschetz fibration with $h\ge 1$ and a complex or
symplectic structure in all cases but the exceptional one.

\begin{definition} \label{Lef Kod}
Given a relative minimal $(g,h,n)$ Lefschetz fibration with
$h\ge 1$, define the Kodaira dimension $\kappa^l(g,h,n)$ as follows:

\[
\kappa^l(g,h,n)=\begin{cases}
-\infty &\hbox{if $g=0$ },\\
0& \hbox{if $(g,h,n)=(1,1,0)$  },\\
1& \hbox{if $(g,h)=(1, \ge 2)$ or $(g,h,n)=(1,1,>0)$ or $(\ge 2,1,0)$ },\\
2& \hbox{if $(g,h)\ge (2,2)$ or $(g,h,n)=(\ge 2,1,\ge 1)$}.
\end{cases}
\]

The Kodaira dimension of a non-minimal Lefschetz fibration with $h\ge 1$ is defined to be that of its minimal models.
\end{definition}

\noindent {\bf Remark:}  A priori, this definition does not necessarily provide a diffeomorphism invariant. Once Theorems \ref{main2} and \ref{main3} as well as Conjecture \ref{c1} have been proven, this will be an invariant up to diffeomorphism. Moreover, we will show later (Proposition \ref{diff}) that $\kappa^l$ is a diffeomorphism invariant even if we can prove only $\kappa^h=\kappa^l$.

\section{The Kodaira dimension on a complex or symplectic manifold}

We have defined three kinds of Kodaira dimension: the complex
$\kappa^h(M,J)$, symplectic  $\kappa^s(M)$ and the  Lefschetz $\kappa^l(g,h,n)$ Kodaira dimensions. In this Section, we prove the equivalence of these definitions when $M$ admits a complex structure. 

\subsection{$\kappa^s(M)=\kappa^h(M,J)$}

The K\"ahler case of the following Theorem was observed in \cite{L} and communicated to us by T.-J. Li.  We include his proof for the convenience of the reader.

{\theorem \label{nonK} Let $M$ be a smooth 4-manifold which admits
a symplectic structure $\omega$ as well as a complex structure $J$.  Then
$\kappa^s(M)=\kappa^h(M,J)$. }

\begin{proof}

Assume that $M$ is not minimal.  The Kodaira dimensions
$\kappa^s(M)$ and $\kappa^h(M,J)$ are invariant under blow-up and
blow-down.  Thus we may blow down any exceptional curves and work
only with the minimal manifolds thus obtained.  We may therefore
assume that $M$ is minimal in the following proof.

We consider first the K\"ahler case:  Let $(M, J, \omega)$ be a K\"ahler surface with
integrable complex structure $J$ and
K\"ahler form $\omega$. Let ${\mathcal K_J}$ be the canonical line bundle.
 Notice that the first Chern class of ${\mathcal K_J}$
is just $K_{\omega}$.  

Castelnuovo-Enriques proved  that K\"ahler surfaces
with $\kappa^h=-\infty$ are  rational or ruled surfaces. So if
$\kappa^h(M,J)=-\infty$ then $\kappa^s(M)=\kappa^h(M,J)$ by Theorem \ref{2.4}.

The plurigenera $P_n(M,J)$, hence the Kodaira dimensions,
 do not change under blowing down.
Moreover, for a K\"ahler surface with non-negative holomorphic Kodaira dimension,
 if the manifold is holomorphically minimal,
then it is smoothly minimal, in particular it is symplectically
minimal (see \cite{FM}). Notice this is not true when $(M, J)$ is a
rational surface, e.g. the Hirzebruch surface $F_3$ is
holomorphically minimal but not smoothly minimal). Thus, to
compare $\kappa^h(M, J)$ and $\kappa^s(M)$ when $\kappa^h(M,J)\geq
0$, we can assume that $(M,J)$ is holomorphically minimal.

If $(M, J)$ is a minimal
surface with $\kappa^h(M,J)=0$, then some positive power of
${\mathcal K_J}$ is a trivial holomorphic line bundle. In
particular, $c_1({\mathcal K_J})$ is a torsion class. Therefore
$\kappa(M)$ is equal to zero as well by Theorem \ref{2.4}.

If $(M, J)$ is a minimal surface with $\kappa^h(M,J)=1$, then it
is a minimal elliptic surface by the surface classification. In
particular, $c_1({\mathcal K}_J)$ has zero square. 

For a surface
$(M,J)$ with $\kappa^h(M,J)=2$, if $n\geq 2$, then
$$
P_n(M,J)=\frac{n(n-1)}{2}c^2_1({\mathcal K}_J)+\frac{1-b_1+b^+}{2}.
$$
In particular, $c_1({\mathcal K}_M)$ has positive square. 


As $K_\omega=c_1(\mathcal K_J)$ and applying Cor. \ref{app} prove $\kappa^s(M)=\kappa^h(M,J)$ if
$\kappa^h(M,J)$ is equal to 1 or 2, hence the result in the
K\"ahler case.

Assume now that $M$ admits a symplectic and a complex structure
but is not K\"ahler. Such manifolds exist, for example the Kodaira-Thurston manifolds.  Recall the classification given in Theorem \ref{Kodaira}.  It follows, that the only class of interest is class (6), as class (7) is not symplectic ($b_2=0$).

Therefore, consider elliptic surfaces in class $(6)$. Class $(6)$ surfaces with $\kappa^h(M,J)=0$ are just primary Kodaira surfaces.  It is shown in
\cite{L} that the symplectic Kodaira dimension of a primary Kodaira surface is 0.  A result of Biquard \cite{B} shows that for
$\kappa^h(M,J)=1$, symplectic is equivalent to K\"ahler.  Thus a non-K\"ahler class $(6)$ surface with $\kappa^h(M,J)=1$ doesn't admit a symplectic structure.  This finishes the proof in the non-K\"ahler case.

%
%
\end{proof}

\noindent {\bf Remark}\begin{enumerate}

\item The symplectic Kodaira dimension and complex Kodaira dimension in dimension $4$ are both diffeomorphism invariants.

\item For projective manifolds, the above theorem is related to
the abundance conjecture. See \cite{CP} for example. For the K\"ahler case, there are some
discussions in \cite{MS}.

\item For the above argument, we only need Biquard's result for properly elliptic surfaces with $p_g=1$.  If $p_g>1$, we can use the classification in \cite{L2}.  In this case, $b_1=2p_g+1\geqslant 5$. However, it was shown in \cite{L2}, that $b_1\le 4$ for all $M$ with $\kappa^s(M)=0$.  Hence $\kappa^s(M) = 1$ would follow without the results in \cite{B}.


\end{enumerate}

\subsection{$\kappa^l(g,h,n)=\kappa^h(M,J)$}\

This section is devoted to the proof of Theorem \ref{main2}.  We begin with some preliminary results which hold in both the complex and symplectic cases.

{\prop \label{20} Let $\pi:M\rightarrow \Sigma_h$ be a relative minimal $(g,h)$ Lefschetz fibration.  Assume that $M$ admits a complex or symplectic structure.  Denote the corresponding Kodaira dimension by $\kappa(M)$.

\begin{enumerate}
\item If $g\ge 2$ and $h\ge 2$, then $\kappa(M)=2$. 
\item If $g=0$, then $\kappa(M)=-\infty$.
\end{enumerate}}

\begin{proof}

Assume that $g,h\ge 2$, then  we obtain
$\kappa(\Sigma_g)=\kappa(\Sigma_h)=1$ by Def \ref{2dim}.  
The subadditivity of the  Kodaira dimension for Lefschetz fibrations (Lemma \ref{subadd}) states
\[ 
\kappa(M)\ge \kappa(\Sigma_g) + \kappa(\Sigma_h).
\]
Thus $\kappa(M)\ge 2$ if $g,h\ge 2$.

Assume $g=0$.  Then the Lefschetz fibration has no nontrivial
vanishing cycles, hence it has no singular points if our fibration
is relative minimal. We obtain a $S^2$ bundle over a Riemann
surface of genus $h$. This has $\kappa(M)=-\infty$ by Castelnuovo-Enriques (complex case) or Theorem \ref{2.4} (symplectic case).
\end{proof}

Applying the subadditivity of
the Kodaira dimension again, we obtain the following simple
corollary:

{\cor \label{sub} Assume that $M$ admits a complex or symplectic structure.  Denote the corresponding Kodaira dimension by $\kappa(M)$.
\begin{enumerate}
\item If $g=h=1$, then $\kappa(M)\geqslant 0$.
\item If $g\geqslant 2$ and $h \geqslant 1$ or $h\geqslant 2$ and $g
\geqslant 1$, then $\kappa(M)\geqslant 1$.
\end{enumerate}}

\begin{proof}
If $g\ge 2$, then we can apply Lemma \ref{subadd} due to Thm. \ref{symp}.  If $g=1$, then we can also apply Lemma \ref{subadd} when the total space is symplectic (in \cite{Wa} it was shown that $T^2$ bundles over a surface $\Sigma_g$ admit symplectic structures if and only if the fiber class is nontrivial). When the total space is complex, it has to be elliptic. Thus again we may apply Lemma \ref{subadd}.

\end{proof}

This corollary reduces the problem to the following two cases:

\begin{enumerate}
\item $g=1$ ;

\item $g\ge 2$ and $h=1$ (the exceptional case).
\end{enumerate}

We first consider the case $g=1$.  The following proposition will be useful in the discussion relating to this case, it holds in the more general framework of a surface bundle with either base or fiber a torus, but with the genus of the base positive:  In that case $\chi(M)=0$ by the
multiplicity of the Euler number.   A theorem of Meyer \cite{M} states that
$\sigma(M)=0$ if we have a surface bundle, such that the genus of the base is strictly positive and either the base or the fiber is a torus. The main point of the
proof is that we have a bound on the signature in terms of  $g$ and $h$,
but we can allow any $k$-fold multiple cover of $T^2$ by another
$T^2$. By pulling back by such a cover, we obtain a contradiction.
Thus we have:

{\prop \label{bdlT2} For a $(g,1)$ or $(1,g)$ surface bundle ($g\geqslant 2$) it follows that $\kappa(M)=1$. }

Once again, we let $\kappa$ denote one of $\kappa^s$ or $\kappa^h$, whichever is defined.

\begin{proof}
By Meyer \cite{M}, $\sigma(M)=0$, then
$K^2=3\sigma(M)+2\chi(M)=0$ (Eq. \ref{K2}). By Cor. \ref{sub}, $\kappa(M)\geqslant 1$ and  $M$ must have
$\kappa(M)=1$.
\end{proof}

We now return to the case where the fiber is a torus.  When the total space is complex, we have the following
famous result of Matsumoto (\cite{Mt}): Every relative minimal elliptic
Lefschetz fibration over a surface $\Sigma_h$ is either a torus
bundle over $\Sigma_h$ or $E(n,h)=E(n)\sharp_f \{\Sigma_h\times
T^2\}$ and all admit compatible complex structures. Thus we have:

{\prop \label{1h} If $g=1$, then the relative minimal $(1,h)$
Lefschetz fibrations with a complex or symplectic structure either have Kodaira dimension $-\infty$, $0$ or $1$. Specifically, letting $\kappa$ denote one of $\kappa^s$ or $\kappa^h$, whichever is defined,  we have:

\begin{enumerate}
\item $\kappa=-\infty$: $T^2 \times S^2$(Ruled surface), non-trivial $T^2$ bundle over $S^2$ (Hopf
surface), or $E(1)$ (non-minimal rational surface);

\item $\kappa=0$: $E(2)$, $T^2$ bundles over $T^2$;

\item $\kappa=1$: All the other cases.
\end{enumerate}
}

\begin{proof}
Consider first  torus bundles over $\Sigma_h$: 
\begin{enumerate}
\item $h=0$:  All such bundles
are complex and have $\kappa=-\infty$. More precisely, they are the following: The ruled surface
$T^2 \times S^2$ or the Hopf surface which is the non-trivial $T^2$ bundle over
$S^2$.

\item $h=1$: The bundles in this case are symplectic (\cite{G}) and have $\kappa=0$ (\cite{L}).

\item $h\geqslant 2$: Then $\kappa(M)=1$ due to the result of Prop. \ref{bdlT2}.  

\end{enumerate}

Consider now the case $E(n,h)=E(n)\sharp_f \{\Sigma_h\times
T^2\}$.  All of these manifolds are known to admit compatible complex structures.  We present a symplectic argument here:  When $h=0$, these are just the elliptic surfaces $E(n)$
which, by
 the classification of elliptic surfaces, have

\[\kappa = \begin{cases}
-\infty & E(1),\\
0& E(2),\\
1 & E(n)\ n> 2.\\
\end{cases}\]

When $h\geqslant1$, then $\kappa(\Sigma_h\times T^2)\geqslant 0$.
Furthermore,  we can check that the fiber sum $E(n,h)$ is
relative minimal. For example, given $E(1,h)$, then
$\Sigma_h\times T^2$ is already minimal, and in  the $E(1)$ part,
the fiber class is $3h-\sum_{i=1}^9e_i$, which  pairs positively
with any exceptional class $e_i$. For general $E(n,h)$, we can add
$E(1)$ step by step. Then it follows (Theorem 4.2, \cite{LY}), that $\kappa(E(n,h))=1$.

Now let the total space is symplectic and has at least one singular fiber, we know they are all holomorphic (see \cite{Moi}), and then it follows by the above discussion.

\end{proof}

\noindent {\bf Remark:}\begin{enumerate}
\item  Note that Prop. \ref{1h} gives information also for
the case in which the base is $S^2$, a case which we have excluded
in our Definition of the Kodaira dimension $\kappa(g,h,n)$. In
particular, this result as well as Prop \ref{20} include relatively
minimal Lefschetz fibrations with base $S^2$.

\item Part 2 of Prop. \ref{1h} can also be found in \cite{Sm}.  

\item In \cite{Wa} it was shown that $T^2$ bundles over a surface $\Sigma_g$ admit symplectic structures if and only if the fiber class is nontrivial.  When the fiber class is trivial, there exist families of complex surfaces (See class $(6)$, Thm \ref{Kodaira}).  However, we don't know if there are any $T^2$ fibrations over $\Sigma_g(g\ge 2)$, which are neither symplectic nor complex. 

\end{enumerate}

%
%
%

Thus we have proved the main result of this section:

{\theorem \label{L=K} Let $M$ admit a relative minimal $(g,h,n)$
Lefschetz fibration with $h\ge 1$ and a complex 
structure.  If $(g,h,n)\ne (> 2,1,\ge 1)$, then $\kappa^h(M,J)=\kappa^l(g,h,n)$.}

\vspace{0.2cm}

\noindent {\bf Remark.} \begin{enumerate}
\item The case $(g,h,n)= (2,1,>0)$ follows from Prop. \ref{he}.

\item  Assume that $M$ admits a $(g,1)$ holomorphic Lefschetz fibration.  Applying Xiao's result (Eq. \ref{xiao}) provides the estimate
\[
K^2\ge (4-\frac{4}{g})\chi_h
\]
which leads to the estimate
\begin{equation}\label{sigmaeuler}
\sigma\ge \frac{-(g+1)}{2g+1}\chi.
\end{equation}
This  estimate is particularly interesting when compared to the signature result in the hyperelliptic case, see Eq. \ref{Endo}.  It would be of great interest in the purely symplectic case as well (For example, our Conjecture \ref{c1} follows then), however it is an open question:  Question 5.10 posed in \cite{AB} in the symplectic setting would lead to this inequality if it is answered in the affirmative.

\end{enumerate}

\subsection{$\kappa^l(g,h,n)=\kappa^s(M)$}\label{sympeq}

\begin{theorem} 
Let $M$ be a smooth symplectic manifold admitting the structure of a $(g,h)$ Lefschetz fibration.  Then $\kappa^s(M)=\kappa^l(g,h,n)$ if $(g,h,n)\ne(>2,1,>0)$. 
\end{theorem}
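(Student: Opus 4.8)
The plan is to run the same case analysis as in the proof of Theorem \ref{L=K}, exploiting the fact that Propositions \ref{20}, \ref{sub}, \ref{bdlT2} and \ref{1h} were all stated and proved under the hypothesis that ``$M$ admits a complex or symplectic structure,'' with $\kappa$ denoting whichever of $\kappa^s$, $\kappa^h$ is defined. First, since both $\kappa^s$ and $\kappa^l$ are invariant under blow-up and are defined on non-minimal spaces through their minimal models, I would blow down and assume the fibration is relatively minimal, hence $M$ minimal by Lemma \ref{S}. Now I simply invoke the shared propositions: Prop. \ref{20}(2) handles $g=0$ (giving $\kappa^s=-\infty=\kappa^l$), Prop. \ref{20}(1) handles $g,h\ge 2$ (giving $\kappa^s=2$), Prop. \ref{1h} handles every $g=1$ subcase, and Prop. \ref{bdlT2} handles the surface bundles $(g\ge 2,1,0)$. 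One checks directly that the value produced in each case equals $\kappa^l(g,h,n)$. After excluding the exceptional case $(>2,1,>0)$, the \emph{only} configuration not yet covered is $(g,h,n)=(2,1,\ge 1)$, for which $\kappa^l=2$; so it remains to prove $\kappa^s(M)=2$ there.

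For this case the strategy is to show $K_\omega^2>0$ and conclude by Corollary \ref{app}. The key structural observation is that a genus-$2$ fiber is always hyperelliptic and, since the hyperelliptic mapping class group coincides with the full mapping class group for $g\le 2$, \emph{every} genus-$2$ Lefschetz fibration is automatically hyperelliptic. Consequently the hyperelliptic signature formula (Endo, cf. Eq. \ref{Endo}) applies. The crucial feature, and the reason the symplectic case parallels the complex one here, is that this formula is purely combinatorial: it expresses $\sigma(M)$ as a sum of local contributions of the singular fibers, depending only on the numbers $n_0$ of nonseparating and $n_1$ of separating vanishing cycles, with $n=n_0+n_1$. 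It therefore holds for symplectic fibrations, where Xiao's slope inequality (Eq. \ref{xiao}), valid only in the holomorphic category, is unavailable.

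The computation is then immediate. Over the base $T^2$ one has $h=1$, so by multiplicativity of the Euler characteristic $\chi(M)=4(g-1)(h-1)+n=n=n_0+n_1$, while Endo's formula gives $\sigma(M)=-\tfrac{3}{5}n_0-\tfrac{1}{5}n_1$. Hence
\begin{equation}
K_\omega^2=2\chi+3\sigma=2(n_0+n_1)+3\left(-\tfrac{3}{5}n_0-\tfrac{1}{5}n_1\right)=\tfrac{1}{5}n_0+\tfrac{7}{5}n_1>0,
\end{equation}
since $n\ge 1$ forces at least one of $n_0,n_1$ to be positive. Relative minimality is automatic here: the nonseparating vanishing cycles produce irreducible nodal fibers and the separating ones split a genus-$2$ fiber into two genus-$1$ pieces, so no fiber contains a symplectic $(-1)$-sphere. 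Corollary \ref{app} then yields $\kappa^s(M)=2=\kappa^l(2,1,\ge 1)$, completing the proof.

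The main obstacle is exactly the point that forces the exclusion of $(>2,1,>0)$: the signature identity I rely on is a statement about \emph{hyperelliptic} fibrations only, and for $g\ge 3$ there exist non-hyperelliptic Lefschetz fibrations to which it does not apply. For those one would need a genuinely symplectic replacement for Xiao's slope inequality—precisely the content of the affirmative answer to Question 5.10 of \cite{AB} discussed after Theorem \ref{L=K}—which is why that range remains conjectural rather than provable by the combinatorial argument above. Thus the only delicate verification in the present theorem is the justification that the hyperelliptic signature formula is topological (hence symplectic) and the bookkeeping that each invoked proposition reproduces the tabulated value of $\kappa^l$.
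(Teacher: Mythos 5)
Your proposal is correct and takes essentially the same approach as the paper: the paper's proof simply assembles Proposition \ref{20}, Corollary \ref{sub}, Proposition \ref{1h} and Proposition \ref{bdlT2}, and settles the remaining case $(2,1,>0)$ via Proposition \ref{he}, whose argument is exactly yours---every genus-$2$ Lefschetz fibration is hyperelliptic, Endo's signature formula is combinatorial and so valid in the symplectic category, and it yields $K_\omega^2=\tfrac{1}{5}\mathfrak a+\tfrac{7}{5}\mathfrak s>0$, matching your computation specialized to $g=2$. Your closing observation that the exclusion of $(>2,1,>0)$ stems from the unavailability of Xiao's slope inequality outside the holomorphic setting (cf.\ Question 5.10 of \cite{AB}) likewise mirrors the paper's remark following Theorem \ref{L=K}.
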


\begin{proof}This follows from Prop. \ref{20}, Cor. \ref{sub}, Prop. \ref{1h} and Prop. \ref{bdlT2} as well as Prop. \ref{he} which addresses the $(g,h,n)=(2,1,>0)$ case. 

\end{proof}

\vspace{0.2cm}

\noindent {\bf Remark.} Notice that by Definition \ref{Lef Kod}, Theorem
\ref{main2} and Theorem \ref{main3} we actually have the following additivity relations $$\kappa^h(M)=\kappa^h(\Sigma_g) + \kappa^h(\Sigma_h)$$
$$\kappa^s(M)=\kappa^s(\Sigma_g) + \kappa^s(\Sigma_h)$$
for the Kodaira dimension of a Lefschetz fibration if the fibration is a bundle.  Additivity can be shown to hold for all the cases in Definition \ref{Lef Kod} by the notion of relative Kodaira dimension, see \cite{LZ}.

\section{The Exceptional Case: Conjectures and Results}

\begin{conj}
\label{c1} If $(g,h,n)=(>2,1,>0)$, then $\kappa^h(M,J)=2$ when $M$ is complex and $\kappa^s(M)=2$ when $M$ is symplectic.
\end{conj}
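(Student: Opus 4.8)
The plan is to reduce the whole conjecture to a single positivity statement for the canonical class and then to split off the complex and symplectic cases. Since $g\ge 2$ and $h=1$, Corollary \ref{sub} already gives $\kappa(M)\ge 1$, so $K_\omega$ (resp.\ $c_1(\mathcal K_J)$) is non-torsion; by Corollary \ref{app} the conjecture is therefore \emph{equivalent} to the single inequality $K^2>0$, the only thing left being to rule out $\kappa=1$. First I would record the topology forced by the fibration data: for a $(g,1,n)$ Lefschetz fibration one has $\chi(M)=\chi(T^2)\,\chi(\Sigma_g)+n=n$, so $\chi(M)=n>0$. Combined with $K^2=2\chi+3\sigma$ (Eq.\ \ref{K2}), the problem becomes a lower bound on the signature, and it suffices to establish the slope-type inequality of Eq.\ \ref{sigmaeuler},
\[
\sigma\ \ge\ -\frac{g+1}{2g+1}\,\chi,
\]
since then $K^2=2\chi+3\sigma\ge \frac{g-1}{2g+1}\chi>0$, using $g>2$ and $\chi=n>0$.

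In the complex case, if the complex structure makes the fibration holomorphic then I would invoke Xiao's slope inequality (Eq.\ \ref{xiao}) with $h=1$, which reads $K^2\ge (4-\tfrac{4}{g})\chi_h$ and is equivalent to the displayed signature bound. The remaining point is $\chi_h>0$: for $h=1$ one has $\chi_h=\chi(\mathcal O_M)=\deg \pi_*\omega_{M/T^2}$, and by the Arakelov--Beauville positivity of the pushed-forward relative dualizing sheaf this degree is $\ge 0$, with equality only for a holomorphic fiber bundle. As $n>0$ forces singular fibers, the fibration is not a bundle, so $\chi_h>0$ and hence $K^2>0$, giving $\kappa^h=2$. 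The gap that keeps even the complex case conjectural is the possibility that the given $J$ is not compatible with $\pi$, so that Xiao's (genuinely holomorphic) result does not directly apply. Here I would try either to exhibit a holomorphic Lefschetz fibration of the same smooth type on $M$ (so that $\kappa^h$, a diffeomorphism invariant, is still computed by Xiao), or to rule out the would-be $\kappa=1$ complex surfaces directly: by the classification of Theorem \ref{Kodaira} a surface with $\kappa^h=1$ is minimal elliptic (up to blow-ups), and I would argue such a surface cannot carry a genus-$(>2)$ Lefschetz fibration over $T^2$ with nodal fibers.

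The symplectic case is where the real obstacle lies, and the strategy is identical up to the one missing ingredient. Everything again reduces to the signature bound above, but Xiao's inequality is a holomorphic theorem whose proof uses the algebraic geometry of the relative canonical model, and no symplectic analogue is presently known. Thus the hard part is to prove a purely symplectic slope inequality for $(g,1,n)$ Lefschetz fibrations; as noted after Theorem \ref{L=K}, an affirmative answer to Question 5.10 of \cite{AB} would supply exactly Eq.\ \ref{sigmaeuler} and finish the conjecture. Short of the general statement, I would assemble evidence by treating tractable subfamilies directly: for hyperelliptic fibrations, Endo's explicit signature formula (Eq.\ \ref{Endo}) computes $\sigma$ in closed form from the vanishing-cycle data, and one can check that over a torus base it forces $K^2>0$, recovering the conjecture in the hyperelliptic range. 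This is consistent with the $g=2$ case handled in Prop.\ \ref{he}, since every genus-two fibration is hyperelliptic. Combining the hyperelliptic computation with the holomorphic case above yields partial confirmation, while the general symplectic statement remains contingent on the symplectic slope inequality, which I expect to be the principal difficulty.
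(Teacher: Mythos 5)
Your proposal is correct as far as it goes and follows the paper's own discussion of this (unproven) conjecture essentially step for step: reduce to $K^2>0$ using $\chi(M)=n>0$ and $K^2=2\chi+3\sigma$, observe that Xiao's slope inequality (equivalently Eq.~\ref{sigmaeuler}) settles the case of a holomorphic fibration while an affirmative answer to Question 5.10 of \cite{AB} would settle the symplectic case, and verify the hyperelliptic family via Endo's signature formula exactly as in Prop.~\ref{he}. Two minor remarks: your Arakelov--Beauville step establishing $\chi_h>0$ is redundant, since $K^2\ge\left(4-\frac{4}{g}\right)\chi_h$ is algebraically equivalent to $\sigma\ge-\frac{g+1}{2g+1}\chi$, which together with $\chi=n>0$ already gives $K^2\ge\frac{g-1}{2g+1}\chi>0$; and the paper supplements this strategy with congruence-type evidence you omit, namely Prop.~\ref{e3} for $n\not\equiv 0 \bmod 3$, Rokhlin's theorem when $M$ is spin and $24\nmid\chi(M)$, and the exclusion of properly elliptic surfaces when $M$ is complex and $12\nmid\chi(M)$.
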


By
Cor. \ref{sub}, the only possible values are $\kappa^s(M)=1$ or
$\kappa^s(M)=2$. In other words, we only need to determine if $K_\omega^2=0$ or
not by the definition of the symplectic Kodaira dimension. We know
that $K_\omega^2=3\sigma(M)+2\chi(M)$.  Moreover, given a $(g,1)$ Lefschetz fibration, the Euler number is
determined by the number of vanishing cycles. In fact,
$$\chi(M)=\chi(\Sigma_g)\cdot \chi(T^2)+\sharp\{vanishing\
cycles\}=\sharp\{vanishing\ cycles\}.$$  Hence $\chi=n>0$.  Thus, if Eq. \ref{sigmaeuler} held, the result would follow.  

In the following two Propositions, we provide some evidence for the validity of this conjecture:

{\prop \label{e3}Given a $(g,1)$ Lefschetz fibration $(g\ge 2)$ with $n\ne 0\  mod\ 3.$  Then $\kappa^s(M) =2$.}

\begin{proof}
We know that $\kappa^s(M)\ge 1$ by Cor. \ref{sub}.  Assume that
$K_\omega^2=0$, i.e. that $\kappa^s(M)=1$.  Then

$$0=2\chi(M)+3\sigma(M)\;\;\Leftrightarrow\;\;-\frac{2}{3}\chi=\sigma(M)\in
\mathbb{Z}.$$

This is a contradiction, as $\chi(M)=n\ne 0$ mod $3$; thus
$\kappa^s(M)=2$.

\end{proof}

Using similar methods, we can show that the conjecture holds for all spin manifolds $M$ admitting a $(g,1)$ Lefschetz fibration if $24\nmid\chi(M)$.  This is essentially a result of Rokhlin's Theorem.  

If we assume that $M$ is complex, then the conjecture holds if $12\nmid\chi(M)$.  This assumption rules out the case $K_J^2=0$, as in the complex case $M$ would admit the structure of a properly elliptic fibration, for which 12 divides $\chi(M)$.

{\prop \label{he} Suppose the Lefschetz fibration is hyperelliptic,
i.e. the monodromy group of every singular fiber has a conjugation
to the hyperelliptic subgroup of the mapping class group. Then,
every hyperelliptic $(g,1)$ Lefschetz fibration $(g\geqslant 2)$
has $\kappa^s=2$. In particular, every $(2,1,>0)$ Lefschetz fibration
has $\kappa^s=2$. }

\begin{proof}
For every hyperelliptic $(g,1)$ Lefschetz fibration , we have a
signature formula in \cite{E}:
\begin{equation} \label{Endo}
\sigma(M)=-\frac{g+1}{2g+1}\mathfrak a+\sum_{h=1}^{[\frac{g}{2}]}
(\frac{4h(g-h)}{2g+1}-1)\mathfrak s_h.
\end{equation}

Here $\mathfrak a$ is the number of nonseparating vanishing cycles and $\mathfrak s_h$
denotes the number of separating vanishing cycles with one of the
separated components a genus $h$ surface. Denote
$\mathfrak s=\sum_{h=1}^{[\frac{g}{2}]}\mathfrak s_h$ to be the number of separating
vanishing cycles. Then $$\chi(M)=\sharp\{vanishing\
cycles\}=\mathfrak a+\mathfrak s.$$

Thus,

\begin{eqnarray*}
K^2&=&3\sigma(M)+2\chi(M)\\
 &=&\frac{g-1}{2g+1}\mathfrak a+\sum_{h=1}^{[\frac{g}{2}]}
\frac{6h(g-2h)+2g(h-1)+(4gh-1)}{2g+1}\mathfrak s_h\\
 &>&0,
\end{eqnarray*}
when $g\geqslant2$ and $\#\{\hbox{singular points}\}=\sharp\{\hbox{vanishing cycles}\}=a+s>0$.

Hence, every hyperelliptic $(g,1)$ Lefschetz fibration
$(g\geqslant 2)$ has $\kappa^s=2$. In particular, every $(2,1,>0)$
Lefschetz fibration is hyperelliptic, thus has $\kappa^s=2$.
\end{proof}

{\bf Remark:}  Consider the fiber sum $M=M_1\#M_2$ of two symplectic manifolds $M_1,M_2$ as defined by Gompf \cite{G1}.  Results in \cite{LY} provide an inequality for the Kodaira dimension $\kappa^s(M)$ with respect to $\kappa^s(M_i)$ and the surface $F$ along which the sum is performed:
\begin{equation}\label{LY}
\kappa^s(M)\ge \max\{\kappa^s(M_1), \kappa^s(M_2),\kappa^s(F)\}
\end{equation}

With regards to that result, we can produce more examples by taking the symplectic fiber sum of a hyperelliptic $(g,1)$ Lefschetz fibration with a $(g,0)$ Lefschetz fibration along the Lefschetz fiber and applying Eq. \ref{LY}.  This will produce a $(g,1)$ Lefschetz fibration with $\kappa^s(M)=2$ which generically should no longer be hyperelliptic.  The same could be done with a $(g,0)$ Lefschetz fibration with underlying manifold of Kodaira dimension $\kappa^s=2$.

At the end of this section, we want to show that, if the first part of the Conjecture holds, $\kappa^l$ is a diffeomorphism invariant. 

{\prop \label{diff} Assume that $\kappa^h(M,J)=2$ if $M$ is a complex surface and $(g,h,n)=(> 2,1,\ge 1)$. Then $\kappa^l$ is a diffeomorphism invariant.

}
\begin{proof} What we need to prove is that when $M$ admits two different Lefschetz fibrations with $h \ge 1$, these should have the same value for the Kodaira dimension $\kappa^l$.  The only case which needs consideration is the exceptional case of Conj \ref{c1}: $(g,h,n) = (> 2,1,\ge 1)$.  We will prove that, if a manifold $M$ admits a $(g,h,n)=(> 2,1,\ge 1)$ Lefschetz fibration,  then at the same time it does not admit a $(1,h,n)$ or $(\ge 2,1,0)$ fibration. We can rule out the case of a $(\ge 2,1,0)$ Lefschetz fibration because in both cases the last number in the triple is the Euler characteristic $\chi(M)$ of $M$.  Every $(1,h,n)$ Lefschetz fibration admits a complex structure by Proposition \ref{1h}; $M$ would be an elliptic complex surface.  However, we have assumed $\kappa^h(M,J)=2$ if $(g,h,n)=(\ge 2,1,\ge 1)$ (Conj. \ref{c1}) and hence this case can be ruled out.    
\end{proof}

Note, that if the first part of Conjecture \ref{c1} holds, then we have shown that a complex surface does not admit a $(\ge 2,1,\ge 1)$ and a $(1,h,n)$ Lefschetz fibration at the same time.

\section{\label{lef0}The Kodaira dimension of Lefschetz Pencils}

In the previous Section, we did not discuss in great detail the case
when the base is $S^2$. The main reason is that a relatively
minimal Lefschetz fibration is no longer necessarily minimal if the base
is $S^2$. For this reason it is interesting to consider Lefschetz
pencils. In this Section, we want to give some easy combinatorial
results in this most interesting case.  We begin by stating some results in \cite{D2}.

{\definition A Topological Lefschetz Pencil (TLP) on a compact
smooth oriented 4-manifold $X$ consists of the following data.

\begin{itemize}
\item Finite, disjoint subsets $A,B\subset X$, $A\ne \emptyset$; 
\item A smooth map
$f:X\backslash A \longrightarrow S^2$ which is a submersion outside $A\cup
B$; such that $f(b)\neq f(b')$ for distinct $b,b'\in B$ and which
is given in suitable oriented charts by the local models
$f(z_1,z_2)= z_2/z_1$ (in a punctured neighborhood of a point in
$A$) and $f(z_1,z_2)= z_1^2+z_2^2$ (in a neighborhood of a point
in $B$).
\end{itemize}
}

We shall generally also denote by $A$ (or $B$) the number of points in the respective sets, our definition implies $A>0$.

We can define two important classes: Firstly,  the
``hyperplane class" $\mathfrak h \in H^2(X;\mathbf{Z})$ which is the
Poincar$\acute{e}$ dual of the fiber class. Secondly, 
$K(X,f) \in H^2(X;\mathbf{Z})$ defined by
$$K(X,f)=-(c_1(V)+2f^*([S^2])).$$

Here $V$ is an oriented 2-plane bundle given by the tangent space
to the fibre of $f$. This can be extended to $X$ and the extension
is unique up to isomorphism. We should note that if $X$ carries a symplectic structure $\omega$ and the pencil is
compatible with  $\omega$, then
$K(\omega)=K(X,f)$.  The following topological facts can be found in \cite{D2}:

{\bf Facts:}

\begin{enumerate}
\item \label{1}The genus $k$ of a smooth fibre of $f$ is
$\frac{1}{2}(\mathfrak h\cdot \mathfrak h+K(X,f)\cdot \mathfrak h+2)$;

\item \label{2}$A=\mathfrak h \cdot \mathfrak h$;

\item \label{3} $B=\chi(X)+\mathfrak h \cdot\mathfrak  h+2K(X,f)\cdot\mathfrak  h$.

\end{enumerate}

We can now make precise the first statement of Theorem \ref{symp}:  If a smooth 4-manifold $X$ admits a TLP structure with hyperplane class $\mathfrak h$ such that $\mathfrak h \cdot\mathfrak  h>0$, then there exists a symplectic form in class $\mathfrak h$ compatible with the fibration structure.  Moreover, blowing up $X$ at $A$ points gives the manifold mentioned in Theorem \ref{symp}.  Conversely, a symplectic manifold $(X,\omega)$ admits a compatible TLP with hyperplane class $\tau[\omega]$ for $\tau$ sufficiently large.  This leads to the following definition:

{\definition \label{comb}  For any minimal smooth
4-manifold $M$ admitting a TLP $(k,A,B)$, we can define the
Kodaira dimension $\kappa^p(M)$ of the manifold $M$ as follows:

\[
\kappa^p(M)=\left\{\begin{array}{cc}
-\infty & \hbox{if $2k-2-A<0$ or $3\sigma(M)+2\chi(M)<0$},\\
0& \hbox{ if $2k-2-A=0$ and $3\sigma(M)+2\chi(M)=0$},\\
1& \hbox{ if $2k-2-A> 0$ and $3\sigma(M)+2\chi(M)=0$},\\
2& \hbox{ if $2k-2-A>0$ and $3\sigma(M)+2\chi(M)>0$}.\\
\end{array}\right.
\]

}

This combined
with the fact that the symplectic Kodaira dimension is a
diffeomorphism invariant, leads to  the following result:

\begin{theorem}
Let $(M,\omega)$ be a minimal symplectic manifold.  Then $\kappa^s(M)=\kappa^p(M)$.  Moreover, $\kappa^p(M)$ is well defined.
\end{theorem}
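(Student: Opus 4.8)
The plan is to reduce the statement to the symplectic Kodaira dimension by translating the two combinatorial quantities in Definition \ref{comb}, namely $2k-2-A$ and $3\sigma(M)+2\chi(M)$, into the pairings $K_\omega\cdot\omega$ and $K_\omega^2$ that govern Definition \ref{symp Kod}. First I would fix an \emph{arbitrary} TLP $(k,A,B)$ on $M$. Since our convention forces $A>0$, Fact \ref{2} gives $\mathfrak h\cdot\mathfrak h=A>0$, so the sharpened form of Theorem \ref{symp}(1) stated above produces a symplectic form $\omega'$ lying in the class $\mathfrak h$ and compatible with the pencil; for such an $\omega'$ one has $K_{\omega'}=K(X,f)$ and $[\omega']=\mathfrak h$.

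Next I would compute the two defining quantities in terms of $\omega'$. Combining Fact \ref{1}, which yields $2k-2=\mathfrak h\cdot\mathfrak h+K(X,f)\cdot\mathfrak h$, with Fact \ref{2}, one gets
\[
2k-2-A=K(X,f)\cdot\mathfrak h=K_{\omega'}\cdot[\omega']=K_{\omega'}\cdot\omega'.
\]
On the other hand, Eq. \ref{K2} gives $3\sigma(M)+2\chi(M)=K_{\omega'}^2$. Thus the pair $(2k-2-A,\ 3\sigma+2\chi)$ is literally the pair $(K_{\omega'}\cdot\omega',\ K_{\omega'}^2)$.

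The remaining step is a term-by-term comparison. Because $(M,\omega)$ is minimal symplectic it is smoothly minimal, hence $(M,\omega')$ is symplectically minimal as well, so Definition \ref{symp Kod} applies verbatim to $\omega'$. Substituting the two identities above, each of the four clauses of Definition \ref{comb} becomes exactly the corresponding clause of Definition \ref{symp Kod}, whence $\kappa^p(M)=\kappa^s(M,\omega')$; here the Remark following Theorem \ref{2.4}, which rules out the combination $K\cdot\omega=0$, $K^2>0$, guarantees that the excluded case $2k-2-A=0$, $3\sigma+2\chi>0$ cannot occur, so the two case lists exhaust the same possibilities and there is no ambiguity. Finally, Theorem \ref{2.4}(2) asserts that $\kappa^s(M,\omega')=\kappa^s(M)$ depends only on the oriented diffeomorphism type of $M$. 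This gives $\kappa^p(M)=\kappa^s(M)$, and since the right-hand side does not involve the chosen TLP, the identical computation for any other TLP on $M$ returns the same value; this is precisely the assertion that $\kappa^p(M)$ is well defined.

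The computation itself is essentially bookkeeping, so I do not expect a genuine obstacle. The one point that needs care is conceptual rather than technical: the compatible form $\omega'$ produced by the pencil need not be the given $\omega$, so the argument genuinely relies on the diffeomorphism invariance in Theorem \ref{2.4}(2) to transfer back to $\kappa^s(M)$, and it relies on having $[\omega']=\mathfrak h$ on the nose (not merely a positive multiple of $\mathfrak h$) in order to identify $2k-2-A$ with $K_{\omega'}\cdot\omega'$ exactly rather than up to a positive scalar.
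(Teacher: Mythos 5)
Your proposal is correct and takes essentially the same approach as the paper: both arguments reduce the combinatorial quantities to $K_{\omega}\cdot\omega=2k-2-A$ (via Facts \ref{1} and \ref{2}) and $K_{\omega}^2=3\sigma+2\chi$ (Eq. \ref{K2}), and both settle well-definedness through the diffeomorphism invariance of $\kappa^s$ in Theorem \ref{2.4}. The only difference is organizational --- the paper starts from the given $\omega$ and chooses a compatible Donaldson pencil, whereas you start from an arbitrary TLP and produce a compatible form $\omega'$ via the Gompf-type statement, which handles the equality and the independence of the choice of pencil in a single pass.
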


\begin{proof}
As stated above, we can find a TLP on $M$ such that the symplectic structure $\omega$ is compatible with the fibration.  Moreover, we can choose $[\omega]=\mathfrak h$ without loss of
generality.  As $\omega$ and the TLP are compatible, we have $K(X,f)=K_\omega$. Thus the definition coincides with our original one if
we know that $K(X,f)\cdot \omega=2k-2-A$. However, this is a quick
consequence of the  first fact above.

The definition is independent of the choice of the TLP because the
symplectic Kodaira dimension is an invariant of the diffeomorphism
type of the manifold and not dependent on the choice of symplectic structure $\omega$.
\end{proof}

\noindent {\bf Remark:}
\begin{enumerate}

\item The first half of the definition consists of combinatorial values of
the TLP, the second half of topological data. Notice that,
$\chi(M)$ also has a combinatorial expression in terms of $(k,A,B)$. For
hyperelliptic TLP, we have a purely combinatorial explanation of
$\sigma(M)$ by virtue of Endo's formula (\ref{Endo}). Thus, our definition is purely combinatorial for hyperelliptic TLP.

\item Given two TLP $(k,A,B)$ and $(k',A',B')$ for a minimal
4-manifold $M$ with $3\sigma(M)+2\chi(M)\geqslant0$.   Then
$2k-2-A>0$ if and only if $2k'-2-A'>0$.

\end{enumerate}

In the spirit of Question \ref{Sep}, we have the following result.

{\prop Given a  $(g,h,B')$ Lefschetz fibration and let $(k,0,A,B)$ be a corresponding
Lefschetz pencil.  Then $k=\frac{1}{4}(A+B-B')-gh+g+h$. }

\begin{proof}
It follows from Fact \ref{3} and Fact \ref{2} above that $K(X,f)\cdot\mathfrak  h=B-A-\chi(X)$.   Moreover, $\chi(X)=\chi(B)\cdot \chi(F)+B'$. Thus 
$$
K(X,f)\cdot
\mathfrak h=\frac{1}{2}(B-A-\chi(B)\cdot
\chi(F)-B')=\frac{1}{2}(B-A-B'-(2-2g)(2-2h)).
$$

 Finally,
\begin{eqnarray*}
k &=&\frac{1}{2}(\mathfrak h\cdot\mathfrak  h+K(X,f)\cdot\mathfrak  h+2)\\
 &=&\frac{1}{2}(A+\frac{1}{2}(B-A-B'-(2-2g)(2-2h))+2)\\
 &=&\frac{1}{4}(A+B-B')-gh+g+h
\end{eqnarray*}
\end{proof}

At the end of this Section, we want to say something about the
possible classification of symplectic manifolds with class
$\kappa=0$.  Li showed, that up to rational homology type, all symplectic manifolds with $\kappa^s=0$ are either the K3 surface, the torus $T^4$ or a $T^2$-bundle over $T^2$ (\cite{L} and \cite{L2}).  It was further conjectured, that this list is complete up to diffeomorphism.  The first result gives evidence of Li's conjectural
classification of symplectic manifolds with $\kappa=0$. The second
result gives some restrictions on such kind of manifolds.

{\prop \label{kod0} Let $M$ be a minimal symplectic 4-manifold with $\kappa(M)=0$.

\begin{itemize}

\item If $M$ admits a $(g,h)$ Lefschetz fibration with $h\geqslant
1$, then $M$ must be the $K3$ surface or a $T^2$-bundle over $T^2$. An
Enriques surface does not admit such a kind of Lefschetz fibration.

\item  For any 
TLP on $M$ the number $A$ must be even.  This is also true for the number of
singular fibers $B$.  Furthermore, the genus $k\ge 2$.
\end{itemize}
}

\begin{proof}
The first one is a Corollary of our classification. It's a
combination of Propositions \ref{20}, \ref{1h}, \ref{bdlT2} and
\ref{he}.

For the second one, notice that if $A=\mathfrak h \cdot\mathfrak  h>0$, then by the
discussion above Definition \ref{comb}, and Fact \ref{1}, we have $\mathfrak h
\cdot\mathfrak  h$ is even.  Similarly, $B=\chi(X)+\mathfrak h\cdot\mathfrak  h+2K(\omega)\cdot\mathfrak  h=\chi(X)+A$ and it
was shown in \cite{L2} that $\chi(X)\in \{0,12,24\}$.

The last result follows from a simple calculation using the above
Definitions and Prop. \ref{1h}.
\end{proof}

{\bf Remark:}\begin{enumerate}
\item The first part can be extended to the case of $S^2$ base and $S^2$ or $T^2$ fiber Lefschetz fibration by Proposition \ref{1h}. For the torus fiber case, see also Theorem $4.2$ in \cite{Sm}.
\item Consider the homotopy K3 surfaces constructed by Fintushel and Stern \cite{FS}.  These manifolds are constructed from knots $K$ in $S^3$.  0-framed knot surgery on $K$ produces a manifold $M_K$ containing a circle $m$ and hence the manifold $M_k\times S^1$ contains a torus $T$ with self-intersection 0.  Let $X$ be a K3 surface and $T_f$ a square 0 torus in $X$.  Then construct the symplectic fiber sum $X_K=X\#_{T_f=T}(M_K\times S^1)$.  It was shown in \cite{FS} that the Seiberg-Witten invariant of this manifold is given by the Alexander polynomial of the knot $K$.  Moreover, each of the $X_K$ is a homotopy K3 surface.  It admits a symplectic structure if the Alexander polynomial is monic. By the main theorem in \cite{LY}, $\kappa^s(X_K)=1$. 

\end{enumerate}

\end{document}